\setlist[enumerate]{nosep}
\definecolor{labelkey}{rgb}{0,0.08,0.45}
\definecolor{refkey}{rgb}{0,0.6,0.0}
\definecolor{Brown}{rgb}{0.45,0.0,0.05}
\definecolor{lime}{rgb}{0.00,0.8,0.0}
\definecolor{lblue}{rgb}{0.5,0.5,0.99}
\newcommand{\seppfive}{\setlength{\itemsep}{-5pt}}
\newcommand{\SE}{\ensuremath{{\mathcal S}}}
\providecommand{\siff}{\Leftrightarrow}
\newcommand{\nnn}{\ensuremath{{n\in{\mathbb N}}}}
\newcommand{\menge}[2]{\big\{{#1}~\big |~{#2}\big\}}
\newcommand{\To}{\ensuremath{\rightrightarrows}}
\newcommand{\fenv}[1]%
{\ensuremath{\,\overrightarrow{\operatorname{env}}_{#1}}}
\newcommand{\benv}[1]%
{\ensuremath{\,\overleftarrow{\operatorname{env}}_{#1}}}
\newcommand{\RR}{\ensuremath{\mathbb R}}
\newcommand{\NN}{\ensuremath{\mathbb N}}
\newcommand{\dom}{\ensuremath{\operatorname{dom}}}
\newcommand{\ran}{\ensuremath{\operatorname{ran}}}
\newcommand{\Fix}{\ensuremath{\operatorname{Fix}}}
\newcommand{\Id}{\ensuremath{\operatorname{Id}}}
\newcommand{\JAinv}{J_{A^{-1}}}
\newcommand{\TAB}{T_{(A,B)}}
\providecommand{\gap}{v}
\providecommand{\fejer}{Fej\'{e}r~}
{\begin{list}{}{%
\settowidth{\labelwidth}{\textrm{#1~}}%
\setlength{\leftmargin}{\labelwidth+\labelsep}}}
{\end{list}}
\crefname{equation}{}{equations}
\crefname{chapter}{Appendix}{chapters}
\crefname{item}{}{items}
\newtheorem{theorem}{Theorem}[section]
\newtheorem{lemma}[theorem]{Lemma}
\newtheorem{lem}[theorem]{Lemma}
\newtheorem{cor}[theorem]{Corollary}
\newtheorem{prop}[theorem]{Proposition}
\newtheorem{definition}[theorem]{Definition}
\newtheorem{thm}[theorem]{Theorem}
\newtheorem{example}[theorem]{Example}
\newtheorem{ex}[theorem]{Example}
\newtheorem{fact}[theorem]{Fact}
\newtheorem{rem}[theorem]{Remark}
\providecommand{\ds}{\displaystyle}
\providecommand{\norm}[1]{\lVert#1\rVert}
\providecommand{\normsq}[1]{\lVert#1\rVert^2}
\providecommand{\bk}[1]{\left(#1\right)}
\providecommand{\stb}[1]{\left\{#1\right\}}
\providecommand{\innp}[1]{\langle#1\rangle}
\providecommand{\RA}{\Rightarrow}
\providecommand{\weak}{\rightharpoonup}
\providecommand{\RR}{\mathbb{R}}
\providecommand{\ran}{\operatorname{ran}}
\providecommand{\intr}{\operatorname{int}}
\providecommand{\dom}{\operatorname{dom}}
\providecommand{\parl}{\operatorname{par}}
\newcommand{\fix}{\ensuremath{\operatorname{Fix}}}
\providecommand{\gra}{\operatorname{gra}}
\providecommand{\Id}{\operatorname{{ Id}}}
\providecommand{\fady}{\varnothing}
\providecommand{\rras}{\rightrightarrows}
\providecommand{\To}{\rightrightarrows}
\providecommand{\NN}{\mathbb{N}}
\providecommand{\fix}{\operatorname{Fix}}
\providecommand{\ran}{\operatorname{ran}}
\providecommand{\Id}{\operatorname{Id}}
\providecommand{\pt}{{\partial}}
\providecommand{\inns}[2][w]{#2_{#1}}
\newcommand{\outs}[2][w]{{_#1}#2}
\providecommand{\fady}{\varnothing}
\providecommand{\RR}{\mathbb{R}}
\providecommand{\NN}{\mathbb{N}}
\providecommand{\xt}{x}
\providecommand{\yt}{y}
\providecommand{\ut}{a^*}
\providecommand{\vt}{b^*}
\providecommand{\at}{a}
\providecommand{\bt}{b}
\definecolor{myblue}{rgb}{.8, .8, 1}
  \newcommand*\mybluebox[1]{%
    \colorbox{myblue}{\hspace{1em}#1\hspace{1em}}}
\begin{document}

%
\title{\textsc
On the Douglas--Rachford algorithm}

%

\title{\textsc
On the Douglas--Rachford algorithm}

\author{
Heinz H.\ Bauschke\thanks{
Mathematics, University
of British Columbia,
Kelowna, B.C.\ V1V~1V7, Canada. E-mail:
\texttt{heinz.bauschke@ubc.ca}.}
~~and Walaa M.\ Moursi\thanks{
Mathematics, University of British Columbia, Kelowna, B.C.\ V1V~1V7, Canada,
and 
Mansoura University, Faculty of Science, Mathematics Department, 
Mansoura 35516, Egypt. 
E-mail: \texttt{walaa.moursi@ubc.ca}.}}

\date{April 15, 2016}

\maketitle

\begin{abstract}
\noindent
The Douglas--Rachford algorithm 
is a very popular splitting technique 
for finding a zero of the sum of
two maximally monotone operators.
However, the behaviour of the algorithm 
remains mysterious in the general inconsistent  case, i.e.,
when the sum problem has no zeros.
More than a decade ago, however, it was shown 
that in the (possibly inconsistent) convex
feasibility setting, the shadow sequence
remains bounded and it is weak cluster 
points solve a best approximation problem.

In this paper, we advance the understanding of the inconsistent
case significantly by providing a complete proof
of the full weak convergence in the 
convex feasibility setting. In fact,
a more general sufficient condition for the 
weak convergence in the general case is presented.
Several examples illustrate the results. 
\end{abstract}
{\small
\noindent
{\bfseries 2010 Mathematics Subject Classification:}
{Primary 
47H05, 
47H09, 
49M27; 
Secondary 
49M29, 
49N15, 
90C25. 
}

\noindent {\bfseries Keywords:}
Attouch--Th\'era duality, 
Douglas--Rachford algorithm,
inconsistent case,
maximally monotone operator,
nonexpansive mapping,
paramonotone operator,
sum problem,
weak convergence.
}

\section{Introduction}

In this paper we assume that
\begin{empheq}[box=\mybluebox]{equation*}
\label{T:assmp}
X \text{~~is a real Hilbert space},
\end{empheq} 
with inner product $\innp{\cdot,\cdot}$ and
induced norm $\norm{\cdot}$. 
A classical problem in optimization is to find a minimizer 
of the sum 
of two proper convex lower semicontinuous functions.
This problem can be modelled as 
\begin{equation}
\label{e:sumprob}
\text{find $x\in X$ such that $0\in (A+B)x$,}
\end{equation}
where $A$ and $B$ are maximally monotone operators on $X$,
namely the subdifferential operators of the functions under consideration. 
For detailed discussions on problem \cref{e:sumprob}
and the connection to optimization problems we
refer the reader to 
\cite{BC2011}, 
\cite{Borwein50}, 
\cite{Brezis}, 	
\cite{BurIus},
\cite{Comb96},
\cite{Simons1},
\cite{Simons2},
\cite{Rock98},
\cite{Zeidler2a},
\cite{Zeidler2b}, and the references therein.

Due to its general convergence results, the 
Douglas--Rachford algorithm has become a very popular 
splitting technique to solve 
the sum problem \cref{e:sumprob}
provided that the solution set is nonempty.
The algorithm was first introduced 
in \cite{DoRa} to numerically solve certain 
types of heat equations. 
Let $x\in X$, let $T=\TAB$ be the 
Douglas--Rachford operator associated
with the ordered pair $(A,B)$ (see \cref{def:T}) and let
$J_A$ be the resolvent of $A$ (see \cref{f:JA:RA}). 
In their masterpiece \cite{L-M79},
Lions and Mercier extended the algorithm 
to be able to find a zero 
of the sum of two,
not necessarily linear and possibly multivalued, maximally
monotone operators. 
They proved that the ``governing sequence" $(T^n x)_\nnn$
converges weakly to a fixed point of $T$,
and that if $A+B$ is maximally monotone,
then the weak cluster points of the ``shadow sequence''
$(J_AT^n x)_\nnn$ are solutions 
of \cref{e:sumprob}.
In \cite{Svaiter}, Svaiter provided a proof of the 
weak convergence of the shadow sequence, 
regardless of $A+B$. 

Nonetheless, very little is known about the 
behaviour of the algorithm in the inconsistent setting, 
i.e., when the set of zeros of the sum is empty. 
In \cite{BCL04} (see \cref{rem:BCL04}),
the authors showed that for the case when
$A$ and $B$ are normal cone operators of 
two nonempty closed convex subsets of $X$, and 
$P_{\overline{\ran}(\Id -T)}\in \ran (\Id-T)$ (see \cref{F:v:WD}),
then the shadow sequence $(J_AT^n x)_\nnn$
is bounded and its weak cluster points solve a certain
best approximation problem. 
 
In this paper 
we derive some new and useful identities 
for the Douglas--Rachford operator.
The main contribution of the paper 
is
generalizing the results in \cite{BCL04}
by proving the full weak convergence of the shadow sequence
in the convex feasibility setting (see \cref{thm:nc:shad}). 
While the general case case remains open (see
\cref{ex:gen:incon} and \cref{rem:gen:case}),
we provide some sufficient conditions for the convergence of
the shadow sequence in some special cases 
(see \cref{thm:abs:gen:shad}).

As a by product of our analysis we present a new proof
for the result in \cite{Svaiter} concerning 
the weak convergence of the shadow sequence 
(see \cref{thm:simp:pr:d}). Our proof
is in the spirit of the techniques used in \cite{L-M79}. 

The notation used in the paper is standard and follows largely
\cite{BC2011}. 

\section{Useful identities for the 
Douglas--Rachford operator}

We start with two elementary identities which are easily verified
directly. 

\begin{lem}
\label{lem:simple:mi}
Let $(a,b,z)\in X^3$. 
Then the following hold:
\begin{enumerate}
\item
\label{lem:simple:mi:i}
$\innp{z,z-a+b}=\normsq{z-a+b}+\innp{a,z-a}+\innp{b,2a-z-b}$.
\item
\label{lem:simple:mi:ii}
$\innp{z,a-b}=\normsq{a-b}+\innp{a,z-a}+\innp{b,2a-z-b}$.
\item
\label{lem:simple:mi:iii}
$\norm{z}^2=\normsq{z-a+b}+\normsq{b-a}
+2\innp{a,z-a}+2\innp{b,2a-z-b}.$
\end{enumerate}
\end{lem}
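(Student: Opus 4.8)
The plan is to verify (i) and (ii) by direct expansion, and then to obtain (iii) essentially for free by adding the first two identities. All three statements are purely algebraic consequences of the bilinearity and symmetry of the inner product, so no machinery is needed; the only care required is the bookkeeping of cross terms.

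For (i) I would expand the right-hand side. Writing $\normsq{z-a+b} = \normsq{z} + \normsq{a} + \normsq{b} - 2\innp{z,a} + 2\innp{z,b} - 2\innp{a,b}$, together with $\innp{a,z-a} = \innp{a,z} - \normsq{a}$ and $\innp{b,2a-z-b} = 2\innp{a,b} - \innp{z,b} - \normsq{b}$, the terms $\normsq{a}$, $\normsq{b}$, and $\innp{a,b}$ all cancel, leaving $\normsq{z} - \innp{z,a} + \innp{z,b}$, which is precisely $\innp{z,z-a+b}$. The verification of (ii) is entirely analogous: expanding $\normsq{a-b} = \normsq{a} - 2\innp{a,b} + \normsq{b}$ and the same two inner products, every squared norm and every $\innp{a,b}$ term cancels, leaving $\innp{z,a} - \innp{z,b} = \innp{z,a-b}$.

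For (iii), rather than expand afresh, I would simply add identities (i) and (ii). On the left-hand side the arguments telescope, $\innp{z,(z-a+b)+(a-b)} = \innp{z,z} = \normsq{z}$, while on the right-hand side the two identical copies of $\innp{a,z-a} + \innp{b,2a-z-b}$ combine to produce the factor $2$, and $\normsq{a-b} = \normsq{b-a}$ supplies the remaining squared-norm term. This delivers (iii) at once.

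I anticipate no genuine obstacle: the statements are elementary and self-verifying. The only thing to watch is the consistency of signs in the cross terms of $\normsq{z-a+b}$ and in the expansion of $\innp{b,2a-z-b}$, where a single sign slip would spoil the cancellation. If one prefers not to invoke (i) and (ii), identity (iii) can equally be obtained by expanding its right-hand side outright: after collecting terms, each of $\normsq{a}$, $\normsq{b}$, $\innp{a,b}$, $\innp{z,a}$, and $\innp{z,b}$ cancels completely, leaving only $\normsq{z}$.
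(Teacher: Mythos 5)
Your proposal is correct: all three expansions check out, and the observation that (iii) follows by adding (i) and (ii) (using $\normsq{a-b}=\normsq{b-a}$) is a clean economy. This matches the paper's approach exactly --- the paper states these identities as ``easily verified directly'' and omits the computation, which is precisely the direct bilinearity bookkeeping you carry out.
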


\begin{lem}
\label{lem:abs:8}
Let $(a,b,x,y,a^*,b^*,u,v)\in X^8$. 
Then  
\begin{align}
 \innp{(\at,\bt)-(\xt, \yt),(\ut,b^*)-(u,v)}&
=\innp{\at-\bt,\ut}
+\innp{\xt,u}
-\innp{\xt,\ut}
-\innp{\at-\bt,u}
\nonumber\\
&\qquad +\innp{\bt,\ut+\vt}
+\innp{\yt,v}
-\innp{\yt,\vt}
-\innp{\bt,u+v}.
\end{align}
\end{lem}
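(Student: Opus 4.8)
The plan is to verify this identity by a direct expansion based on the bilinearity of the inner product; no deep argument is needed. First I would recall that the inner product on the product space $X\times X$ splits as $\innp{(p_1,p_2),(q_1,q_2)}=\innp{p_1,q_1}+\innp{p_2,q_2}$. Applying this to the left-hand side gives $\innp{a-x,a^*-u}+\innp{b-y,b^*-v}$, and expanding each of these two inner products by bilinearity produces the eight-term expression
\[
\innp{a,a^*}-\innp{a,u}-\innp{x,a^*}+\innp{x,u}+\innp{b,b^*}-\innp{b,v}-\innp{y,b^*}+\innp{y,v}.
\]

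Next I would expand the right-hand side term by term, again using bilinearity: $\innp{a-b,a^*}=\innp{a,a^*}-\innp{b,a^*}$, $\innp{a-b,u}=\innp{a,u}-\innp{b,u}$, $\innp{b,a^*+b^*}=\innp{b,a^*}+\innp{b,b^*}$, and $\innp{b,u+v}=\innp{b,u}+\innp{b,v}$, while the four remaining terms $\innp{x,u}$, $\innp{x,a^*}$, $\innp{y,v}$, and $\innp{y,b^*}$ are already atomic and carry the signs displayed in the statement.

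The crucial observation is that the auxiliary ``mixed'' terms cancel in pairs. The two copies of $\innp{b,a^*}$, arising with opposite signs from $+\innp{a-b,a^*}$ and $+\innp{b,a^*+b^*}$, cancel; likewise the two copies of $\innp{b,u}$, arising from $-\innp{a-b,u}$ and $-\innp{b,u+v}$, cancel. After removing these, the right-hand side collapses to exactly the same eight terms obtained from the left-hand side, which establishes the equality. The main obstacle is nothing more than careful bookkeeping—tracking the eight signs and confirming that the spurious terms pairing $b$ with $a^*$ and with $u$ vanish—so the real interest of the lemma lies not in its proof but in its intended use later: it is an algebraic rearrangement tailored to reorganize the inner products appearing in the Douglas--Rachford analysis, isolating the separate contributions of the two operators.
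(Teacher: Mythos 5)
Your proposal is correct: the expansion of both sides by bilinearity and the cancellation of the $\innp{b,a^*}$ and $\innp{b,u}$ pairs is exactly right. The paper itself gives no written proof, stating only that the identity is ``easily verified directly,'' and your direct verification is precisely that intended argument.
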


Unless stated otherwise, we assume from now on that 
\begin{empheq}[box=\mybluebox]{equation*}
A:X\rras X \text{~~and~~} B:X\rras X \text{~~are maximally 
monotone operators.}
\end{empheq} 
The following result
concerning the 
\emph{resolvent} 
$J_A:=(\Id+A)^{-1}$ and 
the \emph{reflected resolvent} 
$R_A:=2J_A-\Id$ is well known;
see, e.g., \cite[Corollary~23.10(i)\&(ii)]{BC2011}.

\begin{fact}
\label{f:JA:RA}
$J_A:X\to X$ is firmly nonexpansive
and 
$R_A:X\to X$ is nonexpansive.
\end{fact}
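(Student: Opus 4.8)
The plan is to base everything on the resolvent characterization
$p = J_A x \siff x - p \in A p$,
which converts statements about $J_A$ into the monotonicity inequality for $A$. First I would record single-valuedness: if $p$ and $q$ both satisfy $x - p \in Ap$ and $x - q \in Aq$, then monotonicity of $A$ applied to the pairs $(p, x-p)$ and $(q, x-q)$ gives $0 \le \innp{p-q,(x-p)-(x-q)} = -\normsq{p-q}$, so $p = q$; thus $J_A$ is at most single-valued. Next, for the firm nonexpansiveness estimate, fix $x, y \in X$ and set $p := J_A x$ and $q := J_A y$, so that $x - p \in Ap$ and $y - q \in Aq$. Monotonicity now yields $\innp{p-q,(x-y)-(p-q)} \ge 0$, that is, $\normsq{p-q} \le \innp{x-y, p-q}$, which is exactly the defining inequality for firm nonexpansiveness of $J_A$. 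I would emphasize that this step uses only monotonicity, not maximality.

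The substantive content, and the main obstacle, is the assertion that $J_A$ is defined on all of $X$, i.e.\ that $\dom J_A = X$, equivalently $\ran(\Id + A) = X$; this is Minty's theorem and is precisely where maximal monotonicity is indispensable. I would establish that the range is nonempty, closed, and open. Closedness follows from the estimate just proved: if $y_n \to y$ with $y_n - p_n \in A p_n$, then $\norm{p_n - p_m} \le \norm{y_n - y_m}$, so $(p_n)$ is Cauchy and converges to some $p$; since $(p_n, y_n - p_n) \to (p, y - p)$ and $\gra A$ is strongly closed (a consequence of maximal monotonicity), we obtain $y - p \in A p$, whence $y \in \ran(\Id + A)$. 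The openness/surjectivity is the genuinely hard analytic step; here I would either invoke Minty's theorem directly or argue that if some $y \notin \ran(\Id + A)$ existed, one could adjoin a suitable pair to $\gra A$ while preserving monotonicity, contradicting maximality.

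Granting that $J_A : X \to X$ is a well-defined firmly nonexpansive mapping, nonexpansiveness of $R_A = 2 J_A - \Id$ is then a one-line algebraic consequence. With $a := J_A x - J_A y$ and $b := x - y$, I would expand
\[
\normsq{R_A x - R_A y} = \normsq{2a - b} = \normsq{b} + 4\big(\normsq{a} - \innp{a,b}\big),
\]
and invoke firm nonexpansiveness in the form $\normsq{a} \le \innp{a,b}$ to conclude $\normsq{R_A x - R_A y} \le \normsq{b} = \normsq{x-y}$. Hence $R_A$ is nonexpansive, completing the proof. The only place requiring real work is the full-domain claim; both nonexpansiveness statements reduce to short monotonicity and algebra computations once that is in hand.
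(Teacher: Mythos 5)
Your proposal is correct, but it does something genuinely different from the paper: the paper offers no proof at all for this statement, simply citing \cite[Corollary~23.10(i)\&(ii)]{BC2011}, whereas you derive both claims from first principles. Your derivation is sound where it is explicit: single-valuedness and the firm nonexpansiveness inequality $\normsq{J_Ax-J_Ay}\le\innp{x-y,J_Ax-J_Ay}$ follow exactly as you say from monotonicity alone applied to the pairs $(J_Ax,\,x-J_Ax)$ and $(J_Ay,\,y-J_Ay)$, and the passage to $R_A=2J_A-\Id$ via $\normsq{2a-b}=\normsq{b}+4\bigl(\normsq{a}-\innp{a,b}\bigr)$ is correct. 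You also correctly isolate the only place where maximality is indispensable, namely $\ran(\Id+A)=X$. The one soft spot is your treatment of that step: your fallback argument (``if $y\notin\ran(\Id+A)$, adjoin a suitable pair to $\gra A$'') is not a proof but a restatement of what must be shown --- producing that pair \emph{is} Minty's theorem, and your closed-plus-open strategy is likewise not how the multivalued case is handled. So that step should simply be an invocation of Minty's theorem (the paper cites \cite{Minty} elsewhere, in \cref{f:Mintypar}), which your first alternative already proposes; with that citation in place your argument is complete. The trade-off between the two approaches is the usual one: the paper's citation is economical and appropriate for a standard fact, while your route makes the logical dependencies visible --- in particular that firm nonexpansiveness needs only monotonicity, and maximality enters solely through the full-domain claim.
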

Let us recall the well-known 
inverse resolvent identity (see, \cite[Lemma~12.14]{Rock98}):
\begin{equation}
\label{e:iri}
J_A+J_{A^{-1}}=\Id,
\end{equation}
and the following useful description of the graph of $A$. 

 \begin{fact}[\bf Minty parametrization]{\rm(See \cite{Minty}.)}
\label{f:Mintypar}
$M\colon X\to \gra A:x\mapsto(J_A x,J_{A^{-1}}x)$
is a continuous bijection, with continuous inverse
$M^{-1}\colon \gra A \to X :(x,u)\to x+u$ ; consequently, 
\begin{equation}\label{Min:par}
\gra A=M(X) = \menge{(J_A x,x-J_A x)}{x\in X}.
\end{equation}
\end{fact}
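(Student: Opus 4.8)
The plan is to exploit the inverse resolvent identity \cref{e:iri}, which rewrites $M$ in the single-resolvent form $M\colon x\mapsto (J_A x, x-J_A x)$ and thereby settles every continuity claim at once. Indeed, by \cref{f:JA:RA} the map $J_A\colon X\to X$ is (firmly) nonexpansive, hence continuous and everywhere single-valued; since $J_{A^{-1}}=\Id-J_A$ by \cref{e:iri}, both coordinates of $M$ are continuous, so $M$ is continuous. The candidate inverse $M^{-1}\colon(x,u)\mapsto x+u$ is the restriction of the continuous addition map, and is therefore continuous as well. Thus the substantive work reduces to verifying that $M$ maps into $\gra A$ and that $M$ and the addition map are mutually inverse.

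First I would check $\ran M\subseteq \gra A$. Fix $x\in X$ and set $p:=J_A x$. By definition of the resolvent, $p=(\Id+A)^{-1}x$ means $x\in p+Ap$, i.e.\ $x-p\in Ap$. Using \cref{e:iri} we have $x-p=x-J_A x=J_{A^{-1}}x$, so $J_{A^{-1}}x\in A(J_A x)$, which is exactly the assertion $(J_A x, J_{A^{-1}}x)\in\gra A$.

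Next I would verify the two composition identities. Writing $N\colon\gra A\to X\colon(x,u)\mapsto x+u$ for the addition map, the identity $N\circ M=\Id_X$ is immediate from \cref{e:iri}:
\begin{equation*}
N(Mx)=J_A x+J_{A^{-1}}x=x.
\end{equation*}
For $M\circ N=\Id_{\gra A}$, take $(x,u)\in\gra A$, so that $u\in Ax$, and set $y:=x+u$. Then $y\in x+Ax=(\Id+A)x$, and since $J_A=(\Id+A)^{-1}$ is single-valued we obtain $J_A y=x$; applying \cref{e:iri} once more yields $J_{A^{-1}}y=y-J_A y=u$. Hence $M(N(x,u))=M(y)=(x,u)$. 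Combining the two identities shows that $M$ is a bijection onto $\gra A$ with inverse $N$, and \cref{Min:par} then follows by substituting $J_{A^{-1}}x=x-J_A x$ into the description of $\ran M$.

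The only delicate point is the appeal to single-valuedness and full domain of $J_A$ in the step $J_A y=x$: this is precisely the content of Minty's theorem, namely that maximal monotonicity of $A$ forces $\ran(\Id+A)=X$ with $(\Id+A)^{-1}$ single-valued, which is already encoded in \cref{f:JA:RA} through the assertion that $J_A$ is a genuine map $X\to X$. Granting that, every remaining step is a direct consequence of the inverse resolvent identity, so no real obstacle arises.
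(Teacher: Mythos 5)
Your proof is correct, and every step checks out: the range inclusion $\ran M\subseteq\gra A$, the two composition identities, and the continuity claims are all argued soundly. Note, however, that the paper does not prove this statement at all --- it records it as a \emph{Fact} with a citation to Minty's 1962 paper, so there is no internal proof to compare against. What you have produced is a self-contained derivation from exactly the two ingredients the paper does record as known: \cref{f:JA:RA} (that $J_A$ is a single-valued, everywhere-defined, firmly nonexpansive map $X\to X$, which packages Minty's surjectivity theorem) and the inverse resolvent identity \cref{e:iri}. Your closing remark is the right one to make: all the genuinely deep content sits inside the assertion that $J_A=(\Id+A)^{-1}$ is a total single-valued map, and granting that, the parametrization is elementary bookkeeping --- $x-J_Ax\in A(J_Ax)$ gives $\ran M\subseteq\gra A$, the identity $J_Ax+J_{A^{-1}}x=x$ gives $N\circ M=\Id_X$, and uniqueness of the resolvent value gives $M\circ N=\Id_{\gra A}$. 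One small stylistic point: since $\gra A$ is in general not all of $X\times X$, it is worth saying (as you implicitly do) that continuity of $M^{-1}$ means continuity of the restriction of the addition map to $\gra A$, which is immediate; with that, your argument is complete and could serve verbatim as a proof of \cref{f:Mintypar}.
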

\begin{definition}
The \emph{Douglas--Rachford splitting operator} associated with 
$(A,B)$ is
\begin{empheq}[box=\mybluebox]{equation}
\label{def:T}
T:=\TAB=\tfrac{1}{2}(\Id+R_BR_A)=\Id-J_A+J_B R_A.
\end{empheq}
\end{definition}

We will simply use $T$ instead of $\TAB$ provided there is no
cause for confusion.

The following result will be useful. 

\begin{lem}
\label{lem:cluster:fen}
Let $x\in X$. Then the following hold:
\begin{enumerate}
\item
\label{lem:cluster:fen:i}
$x-Tx=J_A x -J_BR_A x
=J_{A^{-1}} x +J_{B^{-1}}R_A x$.
\item
\label{lem:cluster:fen:ii}
$(J_A x,J_BR_Ax,J_{A^{-1}} x,
J_{B^{-1}}R_A x)$
lies in $\gra (A\times  B)$.
\end{enumerate}
\end{lem}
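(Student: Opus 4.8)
The plan is to obtain both equalities in \cref{lem:cluster:fen:i} from the definition \cref{def:T} of $T$ together with the inverse resolvent identity \cref{e:iri}, and then to read off \cref{lem:cluster:fen:ii} directly from the Minty parametrization \cref{f:Mintypar}. None of this requires any new machinery: it is all bookkeeping with $R_A=2J_A-\Id$, \cref{e:iri}, and \cref{f:Mintypar}.

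First I would dispatch the left equality in (i). Since \cref{def:T} gives $T=\Id-J_A+J_BR_A$, subtracting from $x$ yields $x-Tx=J_Ax-J_BR_Ax$ with no further work. For the right equality, I would rearrange the desired identity $J_Ax-J_BR_Ax=J_{A^{-1}}x+J_{B^{-1}}R_Ax$ into the equivalent form $J_Ax-J_{A^{-1}}x=J_BR_Ax+J_{B^{-1}}R_Ax$. Applying \cref{e:iri} to $B$ at the point $R_Ax$ collapses the right-hand side to $(J_B+J_{B^{-1}})R_Ax=R_Ax$, while applying \cref{e:iri} to $A$ at $x$ gives $J_{A^{-1}}x=x-J_Ax$, so the left-hand side becomes $J_Ax-(x-J_Ax)=2J_Ax-x=R_Ax$ by the very definition of the reflected resolvent. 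The two sides therefore coincide, which proves (i).

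For (ii), I would invoke the Minty parametrization \cref{f:Mintypar} twice. Evaluated at $x$, it yields $(J_Ax,x-J_Ax)\in\gra A$, which is $(J_Ax,J_{A^{-1}}x)\in\gra A$ once we use $J_{A^{-1}}x=x-J_Ax$ from \cref{e:iri}; evaluated at the point $R_Ax$, it yields $(J_BR_Ax,J_{B^{-1}}R_Ax)\in\gra B$. Combining these two memberships is precisely the statement that the quadruple $(J_Ax,J_BR_Ax,J_{A^{-1}}x,J_{B^{-1}}R_Ax)$, read as the pair of points $\big((J_Ax,J_BR_Ax),(J_{A^{-1}}x,J_{B^{-1}}R_Ax)\big)$, lies in $\gra(A\times B)$.

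I expect no genuine obstacle, since the argument is purely computational. The only point needing mild care is the reduction step for the right equality in (i): one must apply the inverse resolvent identity to the correct operator at the correct argument ($B$ at $R_Ax$ for the pairing into $R_Ax$, and $A$ at $x$ to rewrite $J_{A^{-1}}x$), and keep track of which resolvent is evaluated where so that the $R_Ax=2J_Ax-x$ substitution lands cleanly.
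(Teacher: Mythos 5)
Your proposal is correct and follows essentially the same route as the paper: part (i) is the definition of $T$ plus the inverse resolvent identity applied to $A$ at $x$ and to $B$ at $R_Ax$ (the paper shows the difference of the two expressions is $R_Ax-R_Ax=0$, you show both sides equal $R_Ax$ --- the same computation), and part (ii) is the Minty parametrization, which the paper applies once to $A\times B$ at $(x,R_Ax)$ via $J_{A\times B}=J_A\times J_B$ while you apply it to $A$ and $B$ separately, a trivially equivalent bookkeeping choice.
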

\begin{proof}
\ref{lem:cluster:fen:i}:
The first identity is 
a direct consequence of
\cref{def:T}. 
In view of \cref{e:iri}
$J_A x -J_BR_A x
-J_{A^{-1}} x -J_{B^{-1}}R_A x=
J_A x -(x-J_{A} x)- (J_B+J_{B^{-1}})R_A x
=R_A x -R_Ax=0
$, which proves the second identity.
\ref{lem:cluster:fen:ii}:
Use 
\cref{e:prod:resolvent} and \cref{f:Mintypar}  
applied to
 $A\times B$ at $(x,R_Ax)\in X\times X$.
\end{proof}

The next theorem is a direct consequence
of the key identities presented in \cref{lem:simple:mi}.
\begin{thm}
\label{thm:simp:pr}
Let $x\in X$ and let $y\in X$.
Then  the following hold:
\begin{enumerate}
\item
$
\begin{aligned}[t]
\innp{Tx-Ty,x-y}&=\normsq{Tx-Ty}+\innp{J_Ax-J_Ay,\JAinv x-\JAinv y}\\
&\qquad +\innp{J_BR_Ax-J_BR_Ay,J_{B^{-1}}R_A x-J_{B^{-1}}R_A y}.
\end{aligned}
$
\label{thm:simp:pr:-i}
\item
\label{thm:simp:pr:-ii}
$
\begin{aligned}[t]
\innp{(\Id-T)x-(\Id-T)y,x-y}&=\normsq{(\Id-T)x-(\Id-T)y}
+\innp{J_Ax-J_Ay,\JAinv x-\JAinv y}\\
&\qquad+\innp{J_BR_Ax-J_BR_Ay,J_{B^{-1}}R_A x-J_{B^{-1}}R_A y}.
\end{aligned}
$

\item
\label{thm:simp:pr:i}
$
\begin{aligned}[t]
\normsq{x-y}&=\normsq{Tx-Ty}+\normsq{(\Id-T)x-(\Id-T)y}
+2\innp{J_Ax-J_Ay,\JAinv x-\JAinv y}\\
&\qquad+2\innp{J_BR_Ax-J_BR_Ay,J_{B^{-1}}R_A x-J_{B^{-1}}R_A y}.
\end{aligned}
$
\item
\label{thm:simp:pr:ii}
$
\begin{aligned}[t]
&\norm{J_{A} x-J_{A}y}^2+\norm{J_{A^{-1}} x-J_{A^{-1}}y}^2
- \norm{J_{A}T x-J_{A}Ty}^2-\norm{J_{A^{-1}} Tx-J_{A^{-1}} Ty}^2\\
&=
\normsq{(\Id-T)x-(\Id-T)y}
+2\innp{J_A Tx-J_A Ty,\JAinv Tx-\JAinv Ty}\\
&\qquad+2\innp{J_BR_Ax-J_BR_Ay,J_{B^{-1}}R_A x-J_{B^{-1}}R_A y}.
\end{aligned}
$ 

\item
\label{thm:simp:pr:vv}
 $
 \norm{J_{A}T x-J_{A}Ty}^2+\norm{J_{A^{-1}} Tx-J_{A^{-1}} Ty}^2
 \le \norm{J_{A} x-J_{A}y}^2+\norm{J_{A^{-1}} x-J_{A^{-1}}y}^2.
$

\end{enumerate}
\end{thm}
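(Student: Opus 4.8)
The plan is to read off the claimed inequality directly from item \cref{thm:simp:pr:ii}, whose left-hand side is \emph{exactly} the difference $\norm{J_{A} x-J_{A}y}^2+\norm{J_{A^{-1}} x-J_{A^{-1}}y}^2-\norm{J_{A}T x-J_{A}Ty}^2-\norm{J_{A^{-1}} Tx-J_{A^{-1}} Ty}^2$ that item \cref{thm:simp:pr:vv} asserts to be nonnegative. So no fresh computation is needed: it suffices to check that each of the three summands on the right-hand side of \cref{thm:simp:pr:ii} is nonnegative, and then conclude by adding them.

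The first summand, $\normsq{(\Id-T)x-(\Id-T)y}$, is a squared norm and hence nonnegative. For the second summand $2\innp{J_A Tx-J_A Ty,\JAinv Tx-\JAinv Ty}$, I would invoke the Minty parametrization (\cref{f:Mintypar}) at the points $Tx$ and $Ty$, which gives $(J_A Tx,\JAinv Tx)\in\gra A$ and $(J_A Ty,\JAinv Ty)\in\gra A$; monotonicity of $A$ then yields $\innp{J_A Tx-J_A Ty,\JAinv Tx-\JAinv Ty}\ge 0$. For the third summand, the same reasoning via \cref{lem:cluster:fen:ii} places $(J_BR_Ax,J_{B^{-1}}R_Ax)$ and $(J_BR_Ay,J_{B^{-1}}R_Ay)$ in $\gra B$, so monotonicity of $B$ makes that inner product nonnegative as well.

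Adding the three nonnegative terms shows the right-hand side of \cref{thm:simp:pr:ii} is $\ge 0$, hence so is its left-hand side, which is precisely the inequality in \cref{thm:simp:pr:vv}. There is essentially no obstacle: all the content has already been front-loaded into the identity of item \cref{thm:simp:pr:ii}, and the only extra inputs are the graph-membership statements furnished by the Minty parametrization together with the monotonicity of $A$ and of $B$. The single point to state carefully is that monotonicity is applied to two graph points belonging to the \emph{same} operator, so that each cross inner product is genuinely a first-coordinate difference paired with the corresponding second-coordinate difference, as required for the monotonicity inequality to apply.
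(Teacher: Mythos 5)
Your argument for item \ref{thm:simp:pr:vv} is correct and coincides exactly with the paper's own final step: on the right-hand side of \ref{thm:simp:pr:ii} the first summand is a squared norm, and the second and third are nonnegative because the Minty parametrization \cref{Min:par} exhibits $(J_ATx,\JAinv Tx)$ and $(J_ATy,\JAinv Ty)$ as points of $\gra A$, and $(J_BR_Ax,J_{B^{-1}}R_Ax)$ and $(J_BR_Ay,J_{B^{-1}}R_Ay)$ as points of $\gra B$, so monotonicity of $A$ and of $B$ applies. (A minor quibble: the natural citation for the $B$-pairs is \cref{f:Mintypar} applied to $B$ at the points $R_Ax$ and $R_Ay$, rather than \cref{lem:cluster:fen}\ref{lem:cluster:fen:ii}, but this does not affect correctness.)

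The genuine gap is one of scope. The statement comprises the five items \ref{thm:simp:pr:-i}--\ref{thm:simp:pr:vv}, but your proposal proves only \ref{thm:simp:pr:vv}, treating the identity \ref{thm:simp:pr:ii} as available from the start (``all the content has already been front-loaded''). Since \ref{thm:simp:pr:ii} is itself one of the assertions being proved, your argument is incomplete: its sole substantive input is never established, and that input is precisely where the computational work of the theorem lies. What is missing is: (a) items \ref{thm:simp:pr:-i}, \ref{thm:simp:pr:-ii} and \ref{thm:simp:pr:i}, which the paper obtains from the three elementary identities of \cref{lem:simple:mi} under the substitution $z=x-y$, $a=J_Ax-J_Ay$, $b=J_BR_Ax-J_BR_Ay$, using \cref{def:T} to recognize $(\Id-T)x-(\Id-T)y=a-b$ and $Tx-Ty=z-a+b$, and \cref{Min:par} together with \cref{e:iri} to identify the inner products $\innp{a,z-a}$ and $\innp{b,2a-z-b}$ as the stated pairings of graph points of $A$ and $B$; and (b) item \ref{thm:simp:pr:ii}, which follows by combining item \ref{thm:simp:pr:i} with the expansion $\normsq{u-w}=\normsq{J_Au-J_Aw}+\normsq{\JAinv u-\JAinv w}+2\innp{J_Au-J_Aw,\JAinv u-\JAinv w}$ (a consequence of the inverse resolvent identity \cref{e:iri}), applied once at $(u,w)=(x,y)$ and once at $(u,w)=(Tx,Ty)$. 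Until (a) and (b) are supplied, there is nothing from which to ``read off'' the inequality in \ref{thm:simp:pr:vv}.
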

\begin{proof}
\ref{thm:simp:pr:-i}--\ref{thm:simp:pr:i}:
Use Lemma~\ref{lem:simple:mi}--\ref{lem:simple:mi:iii}
respectively,
with $z=x-y$, $a=J_A x-J_Ay$ and
$b=J_BR_A x-J_BR_A y$, \cref{Min:par} 
and \cref{def:T}.
\ref{thm:simp:pr:ii}:
It follows from the 
\cref{e:iri} that 
\begin{subequations}
\label{JA:decom}
\begin{align}
\norm{x-y}^2&=\norm{J_Ax-J_Ay+J_{A^{-1}}{x}-J_{A^{-1}}{y}}^2\\
&=\norm{J_Ax-J_Ay}^2+\norm{J_{A^{-1}}{x}-J_{A^{-1}}{y}}^2
+2\innp{J_Ax-J_Ay,J_{A^{-1}}{x}-J_{A^{-1}}{y}}.
\end{align}
\end{subequations}
Applying \eqref{JA:decom} 
to $(Tx,Ty)$ instead of $(x,y)$  yields
\begin{align}\label{e:xy:TxTy}
\norm{Tx-Ty}^2
&=\norm{J_ATx-J_ATy}^2+\norm{J_{A^{-1}}{Tx}-J_{A^{-1}}{Ty}}^2\notag\\
&\qquad+2\innp{J_ATx-J_ATy,J_{A^{-1}}{Tx}-J_{A^{-1}}{Ty}}.
\end{align}
Now combine \cref{JA:decom}, \eqref{e:xy:TxTy} 
and \ref{thm:simp:pr:i}
to obtain \ref{thm:simp:pr:ii}. 
\ref{thm:simp:pr:vv}: 
In view of 
\eqref{Min:par}, the monotonicity of $A$
and $B$ implies
$\innp{J_ATx-J_ATy,\JAinv Tx-\JAinv Ty}\ge 0$ and 
$\innp{J_BR_Ax-J_BR_Ay,J_{B^{-1}}R_A x-J_{B^{-1}}R_A y}\ge 0$.
Now use \ref{thm:simp:pr:ii}.
 \end{proof}

\section{The Douglas--Rachford operator, Attouch--Th\'era duality and
solution sets}

The \emph{Attouch--Th\'{e}ra dual pair} (see \cite{AT}) of $(A,B)$
is $(A,B)^*:=(A^{-1},B^{-\ovee})$,
where 
\begin{equation}
B^{\ovee}:= (-\Id)\circ B\circ(-\Id)\quad{\text{and}}
\quad B^{-\ovee}:=(B^{-1})^\ovee=(B^\ovee)^{-1}.
\end{equation}
We use
\begin{equation}
Z:=Z_{(A,B)}=(A+B)^{-1}(0)
\qquad\text{and }\qquad 
K:=K_{(A,B)}=(A^{-1}+B^{-\ovee})^{-1}(0)
\end{equation}
to denote the primal and dual solutions, respectively
(see, e.g., \cite{JAT2012}).

Let us record some useful properties of $T_{(A,B)}$.

\begin{fact}
\label{f:sd:ZK}
The following hold:
\begin{enumerate}
\item
\label{T:fne}
{\bf (Lions and Mercier).}
$T_{(A,B)}$ is firmly nonexpansive.
\item
\label{T:selfdual}
{\bf (Eckstein).}
$T_{(A,B)} 
= T_{(A^{-1},B^{-\ovee})}.
$
\item
\label{fix:inc:Z}
{\bf(Combettes).} $Z=J_A(\fix T)$.
\item
\label{fix:inc:K}
$K=J_{A^{-1}}(\fix T).$
\end{enumerate}
\end{fact}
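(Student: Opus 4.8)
The plan is to establish the four parts in sequence, obtaining \ref{fix:inc:K} as a corollary of \ref{fix:inc:Z} via the self-duality \ref{T:selfdual}. For the firm nonexpansiveness \ref{T:fne}, I would simply read off \cref{thm:simp:pr}\ref{thm:simp:pr:-i}: by the Minty parametrization \cref{Min:par} together with the monotonicity of $A$ and $B$, the two inner products $\innp{J_Ax-J_Ay,\JAinv x-\JAinv y}$ and $\innp{J_BR_Ax-J_BR_Ay,J_{B^{-1}}R_A x-J_{B^{-1}}R_A y}$ are nonnegative (exactly the argument used to prove \cref{thm:simp:pr}\ref{thm:simp:pr:vv}). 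Hence $\innp{Tx-Ty,x-y}\ge\normsq{Tx-Ty}$ for all $x,y\in X$, which is the defining inequality of firm nonexpansiveness.

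For the self-duality \ref{T:selfdual}, the key is to compute the reflected resolvents of the dual operators and watch them recombine. From \cref{e:iri} one gets $R_{A^{-1}}=2J_{A^{-1}}-\Id=\Id-2J_A=-R_A$. For the $\ovee$-conjugate I would first verify, directly from the resolvent equation, that $J_{B^{\ovee}}=(-\Id)\circ J_B\circ(-\Id)$, whence $R_{B^{\ovee}}=(-\Id)\circ R_B\circ(-\Id)$ and, applying the inversion rule once more, $R_{B^{-\ovee}}=-R_{B^{\ovee}}=R_B\circ(-\Id)$. Composing then gives the collapse $R_{B^{-\ovee}}\circ R_{A^{-1}}=(R_B\circ(-\Id))\circ(-R_A)=R_BR_A$, so that $T_{(A^{-1},B^{-\ovee})}=\tfrac12(\Id+R_{B^{-\ovee}}R_{A^{-1}})=\tfrac12(\Id+R_BR_A)=T_{(A,B)}$ by \cref{def:T}.

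For \ref{fix:inc:Z} I would characterize $\fix T$ using \cref{lem:cluster:fen}. Writing $p:=J_Ax$, $p^*:=J_{A^{-1}}x$, $q:=J_BR_Ax$ and $q^*:=J_{B^{-1}}R_Ax$, part \ref{lem:cluster:fen:ii} yields $p^*\in Ap$ and $q^*\in Bq$, while part \ref{lem:cluster:fen:i} shows that $x\in\fix T$ forces both $p=q$ and $p^*=-q^*$, since both expressions there equal $x-Tx=0$. Then $p^*\in Ap$ and $-p^*=q^*\in Bp$, so $0\in Ap+Bp$ and $p=J_Ax\in Z$. Conversely, given $z\in Z$ choose $u$ with $u\in Az$ and $-u\in Bz$ and set $x:=z+u$; one checks $J_Ax=z$, $R_Ax=z-u$, and $J_B(z-u)=z$, so $x-Tx=J_Ax-J_BR_Ax=0$ and $z=J_Ax\in J_A(\fix T)$. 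This gives $Z=J_A(\fix T)$.

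Finally, \ref{fix:inc:K} follows by applying \ref{fix:inc:Z} to the dual pair $(A^{-1},B^{-\ovee})$: its primal solution set is $(A^{-1}+B^{-\ovee})^{-1}(0)=K$, its resolvent is $J_{A^{-1}}$, and its Douglas--Rachford operator coincides with $T$ by \ref{T:selfdual}; hence $K=J_{A^{-1}}(\fix T)$. I expect the main obstacle to be \ref{T:selfdual}: the sign bookkeeping in $J_{B^{\ovee}}$, $R_{B^{\ovee}}$ and $R_{B^{-\ovee}}$ is where an error is most likely, and it is precisely the correctness of these identities that drives the self-dual collapse $R_{B^{-\ovee}}R_{A^{-1}}=R_BR_A$ on which both \ref{T:selfdual} and \ref{fix:inc:K} rest.
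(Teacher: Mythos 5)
Your proposal is correct, but it takes a genuinely different route from the paper, which treats this statement as a \emph{Fact} and disposes of all four parts by citation: \ref{T:fne} to \cite{L-M79} and \cite{EckThesis} (with \cref{thm:simp:pr}\ref{thm:simp:pr:-i} offered as an in-paper alternative), \ref{T:selfdual} to \cite{EckThesis} and \cite{JAT2012}, \ref{fix:inc:Z} to \cite{Comb04}, and \ref{fix:inc:K} to \cite{JAT2012}. You instead supply self-contained arguments. Your proof of \ref{T:fne} coincides with the paper's internal alternative, since $\innp{Tx-Ty,x-y}\ge\normsq{Tx-Ty}$ is one of the standard equivalent formulations of firm nonexpansiveness. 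For \ref{T:selfdual} your sign bookkeeping checks out: $R_{A^{-1}}=-R_A$ by \cref{e:iri}, $J_{B^{\ovee}}=(-\Id)\circ J_B\circ(-\Id)$ follows from the resolvent equation, hence $R_{B^{-\ovee}}=R_B\circ(-\Id)$ and $R_{B^{-\ovee}}R_{A^{-1}}=R_BR_A$, so the two operators agree by \cref{def:T}. Your fixed-point characterization for \ref{fix:inc:Z} via \cref{lem:cluster:fen} is sound in both directions (in particular the converse construction $x=z+u$ with $u\in Az$, $-u\in Bz$ does give $J_Ax=z$, $R_Ax=z-u$, $J_BR_Ax=z$, hence $x\in\fix T$), and deducing \ref{fix:inc:K} by applying \ref{fix:inc:Z} to the Attouch--Th\'era dual pair $(A^{-1},B^{-\ovee})$ together with \ref{T:selfdual} is exactly the natural mechanism (it is essentially how these facts are related in \cite{JAT2012}). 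What your approach buys is a verification entirely inside the paper's own framework, at the cost of re-proving known results; what the paper's approach buys is brevity and attribution of each statement to its original source.
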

\begin{proof}
\ref{T:fne}: 
See, e.g., \cite[Lemma~1]{L-M79},
\cite[Corollary~4.2.1 on page~139]{EckThesis}, 
\cite[Corollary~4.1]{EckBer}, or
\cref{thm:simp:pr}\ref{thm:simp:pr:-i}
\ref{T:selfdual}: See e.g., 
\cite[Lemma~3.6 on page~133]{EckThesis} 
or \cite[Proposition~2.16]{JAT2012}).
\ref{fix:inc:Z}:
See \cite[Lemma~2.6(iii)]{Comb04}.
\ref{fix:inc:K}: 
See \cite[Corollary~4.9]{JAT2012}.
\end{proof}

The following notion, coined by Iusem \cite{Iusem98},
is very useful.
We say that $C:X\rras X$ 
is \emph{paramonotone}
if 
it is monotone and 
we have the implication
\begin{equation}
\left.
\begin{array}{c}
 (x,u)\in \gra C\\
(y,v)\in \gra C\\
\innp{x-y,u-v}=0
\end{array}
\right\}
\quad\RA\quad 
\big\{(x,v),(y,u)\big\}\subseteq \gra C.
\end{equation}

\label{fact:para:i}
\begin{example}
\label{ex:para:goodsub}
Let $f:X\to\left]-\infty,+\infty\right]$ be 
proper, convex and lower semicontinuous.
Then $\pt f$ is paramonotone by 
\cite[Proposition~2.2]{Iusem98} 
(or by \cite[Example~22.3(i)]{BC2011}).
\end{example}

We now recall that the so-called
``extended solution set''
(see \cite[Section~2.1]{EckSvai08}
and also \cite[Section~3]{JAT2012}) is defined by 
\begin{equation}
\label{def:ex:sol}
\SE := \SE_{(A,B)} :=\stb{(z,k)\in 
X\times X~|~-k\in Bz, k\in Az}\subseteq Z\times K.
\end{equation}
\begin{fact}
\label{fact:para:cc}
Recalling \cref{f:Mintypar}, 
we have the following:
\begin{enumerate}
\item
\label{fact:para:cc:ii}
$\SE = M(\Fix T) = 
\menge{(J_A\times J_{A^{-1}}) (y,y)}{y\in \fix T}$.
\item
\label{fact:para:cc:iii}
$\fix T=M^{-1}(\SE) = \menge{z+k}{(z,k)\in \SE}$.
\item
\label{fact:para:cc:i}
{(\bf Eckstein and Svaiter).} $\SE $ is closed and convex. 
\end{enumerate}
If $A$ and $B$ are paramonotone, 
then we additionally have:
\begin{enumerate}
 \setcounter{enumi}{3}
 \item
\label{fact:para:ii:b}
$\SE=Z\times K$.
\item
\label{fact:para:ii:a}
$\fix T=Z+K$.

\end{enumerate}
\end{fact}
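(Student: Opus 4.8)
The plan is to prove the five assertions in order, using the earlier parts and the recorded identities to bootstrap the later ones.

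For \cref{fact:para:cc:ii} I would verify the two inclusions of $\SE = M(\fix T)$ separately, the second displayed equality being just the definition of $M$. For ``$\subseteq$'' in the form $M(\fix T)\subseteq\SE$: fix $y\in\fix T$ and put $(z,k):=M(y)=(J_Ay,J_{A^{-1}}y)$, which lies in $\gra A$ by \cref{f:Mintypar}, so $k\in Az$. Since $y-Ty=0$, \cref{lem:cluster:fen}\ref{lem:cluster:fen:i} gives $J_Ay=J_BR_Ay$ and $J_{B^{-1}}R_Ay=-J_{A^{-1}}y=-k$, while \cref{lem:cluster:fen}\ref{lem:cluster:fen:ii} places $(J_BR_Ay,J_{B^{-1}}R_Ay)$ in $\gra B$; hence $-k\in B(J_BR_Ay)=B(J_Ay)=Bz$, so $(z,k)\in\SE$. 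For the reverse inclusion I would take $(z,k)\in\SE$, set $y:=z+k$, and read off from \cref{f:Mintypar} applied to $A$ (at $(z,k)\in\gra A$) that $J_Ay=z$ and $R_Ay=2z-y=z-k$, and from \cref{f:Mintypar} applied to $B$ (at $(z,-k)\in\gra B$, since $-k\in Bz$) that $J_B(z-k)=z$. Then \cref{lem:cluster:fen}\ref{lem:cluster:fen:i} yields $y-Ty=J_Ay-J_BR_Ay=z-z=0$, so $y\in\fix T$ and $(z,k)=M(y)\in M(\fix T)$.

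Part \cref{fact:para:cc:iii} is then immediate: every $(z,k)\in\SE$ satisfies $k\in Az$, so $\SE\subseteq\gra A$, and applying the bijection $M^{-1}\colon(z,k)\mapsto z+k$ to $\SE=M(\fix T)$ gives $\fix T=M^{-1}(\SE)=\{z+k\mid(z,k)\in\SE\}$. For \cref{fact:para:cc:i}, closedness follows because $T$ is firmly nonexpansive by \cref{f:sd:ZK}\ref{T:fne}, so $\fix T$ is closed, and $M$ is a homeomorphism onto the closed set $\gra A$, whence $\SE=M(\fix T)$ is closed; convexity is the Eckstein--Svaiter theorem, which I would cite, since it does \emph{not} follow merely from convexity of $\fix T$ (the map $M$ is not affine).

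Assume now that $A,B$ are paramonotone, and turn to \cref{fact:para:ii:b}. The inclusion $\SE\subseteq Z\times K$ is already noted in \eqref{def:ex:sol}, so the work is in $Z\times K\subseteq\SE$. Fixing $z\in Z$ and $k\in K$ and unpacking the definitions, $z\in Z$ yields some $k_1$ with $(z,k_1)\in\SE$ (i.e.\ $k_1\in Az$, $-k_1\in Bz$), and $k\in K$ yields some $z_1$ with $(z_1,k)\in\SE$ (i.e.\ $k\in Az_1$, $-k\in Bz_1$). The decisive step is to pit the two monotonicity inequalities against each other: monotonicity of $A$ at $(z,k_1),(z_1,k)\in\gra A$ gives $\innp{z-z_1,k_1-k}\ge 0$, whereas monotonicity of $B$ at $(z,-k_1),(z_1,-k)\in\gra B$ gives $\innp{z-z_1,-k_1+k}\ge 0$, and the opposite signs force $\innp{z-z_1,k_1-k}=0$. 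Paramonotonicity of $A$ then upgrades $(z,k_1),(z_1,k)\in\gra A$ to $(z,k)\in\gra A$, and paramonotonicity of $B$ upgrades $(z,-k_1),(z_1,-k)\in\gra B$ to $(z,-k)\in\gra B$; together these say $(z,k)\in\SE$. Finally \cref{fact:para:ii:a} drops out of \cref{fact:para:cc:iii} and \cref{fact:para:ii:b}: $\fix T=\{z+k\mid(z,k)\in\SE\}=\{z+k\mid z\in Z,\ k\in K\}=Z+K$.

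I expect the main obstacle to be \cref{fact:para:ii:b}: the \emph{reverse} inclusion requires manufacturing the two auxiliary points $(z,k_1)$ and $(z_1,k)$ of $\SE$ out of $z\in Z$ and $k\in K$, recognizing that the $A$- and $B$-monotonicity inequalities for these points carry opposite signs and hence collapse the pairing to zero, and only then invoking the paramonotonicity implication twice to recombine coordinates into $(z,k)$. The earlier parts are essentially bookkeeping with \cref{lem:cluster:fen} and \cref{f:Mintypar}, except for the convexity in \cref{fact:para:cc:i}, which is genuinely nontrivial and is best handled by citation.
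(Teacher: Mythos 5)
Your proposal is correct; every step checks out against the paper's machinery (the two inclusions for $\SE=M(\Fix T)$ via \cref{lem:cluster:fen} and \cref{f:Mintypar}, the bijection argument for $\Fix T=M^{-1}(\SE)$, and the paramonotonicity argument for $\SE=Z\times K$). It is, however, a genuinely different route from the paper's, because the paper treats this statement as a \emph{Fact} and disposes of it almost entirely by citation: \cref{fact:para:cc:ii} and \cref{fact:para:cc:iii} are quoted from \cite[Theorem~4.5]{JAT2012}, the convexity in \cref{fact:para:cc:i} from \cite[Lemma~2]{EckSvai08} (or \cite[Corollary~3.7]{JAT2012}) --- with the closedness deduced, exactly as you do, from \cref{fact:para:cc:ii} and the continuity of $M$ and $M^{-1}$ --- and \cref{fact:para:ii:b}\&\cref{fact:para:ii:a} from \cite[Corollary~5.5(ii)\&(iii)]{JAT2012}. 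What you do differently is reprove the cited material in-house: your verification of $M(\Fix T)\subseteq\SE$ and its converse is precisely the content of the cited Theorem~4.5, and your argument for \cref{fact:para:ii:b} --- extracting $(z,k_1)\in\SE$ from $z\in Z$ and $(z_1,k)\in\SE$ from $k\in K$, playing the monotonicity inequality of $A$ against that of $B$ to force $\innp{z-z_1,k_1-k}=0$, then invoking paramonotonicity of each operator to swap coordinates --- is the standard proof underlying Corollary~5.5 of \cite{JAT2012}. Your version buys self-containedness: it shows these facts follow in a few lines from tools the paper has already developed (\cref{f:Mintypar}, \cref{lem:cluster:fen}), at the cost of length, which is presumably why the authors chose citation. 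The one ingredient that genuinely resists an elementary derivation, the Eckstein--Svaiter convexity of $\SE$, you correctly isolate and handle by citation just as the paper does, with the apt remark that it cannot be inferred from convexity of $\Fix T$ since $M$ is not affine.
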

\begin{proof}
\ref{fact:para:cc:ii}\&\ref{fact:para:cc:iii}:
This is \cite[Theorem~4.5]{JAT2012}.
\ref{fact:para:cc:i}:
See \cite[Lemma~2]{EckSvai08}.
Alternatively, 
since $\Fix T$ is closed, and $M$ and $M^{-1}$ are continuous, we
deduce the closedness from \ref{fact:para:cc:ii}.
The convexity was proved in \cite[Corollary~3.7]{JAT2012}.
\ref{fact:para:ii:b}~\&~\ref{fact:para:ii:a}:
See \cite[Corollary~5.5(ii)\&(iii)]{JAT2012}. 
\end{proof}

Working in $X\times X$,
we 
recall that (see, e.g., \cite[Proposition~23.16]{BC2011}) 
 \begin{equation}
 \label{e:prod:resolvent}
 \text{$A\times B$ is maximally monotone \; and }\;
 J_{A\times B}=J_A\times J_B.
 \end{equation}
 
 \begin{cor}
 \label{thm:simp:pr:d:iii}
Let $x\in X$ and let  $(z,k)\in \SE$. 
Then  
\begin{subequations}
\begin{align}
\normsq{(J_{A}T x,J_{A^{-1}} Tx)-(z,k)}
&=
\norm{J_{A}T x-z}^2+\norm{J_{A^{-1}} Tx-k}^2\\
&\le
\norm{J_{A} x-z}^2+\norm{J_{A^{-1}} x-k}^2\\
&=\normsq{(J_{A}x,J_{A^{-1}} x)-(z,k)}.
\end{align}
\end{subequations}
 \end{cor}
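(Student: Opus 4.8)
The plan is to reduce the displayed chain to the contraction estimate already in hand, namely \cref{thm:simp:pr}\ref{thm:simp:pr:vv}, which asserts that for all $x,y\in X$,
$$\norm{J_{A}Tx-J_{A}Ty}^2+\norm{J_{A^{-1}}Tx-J_{A^{-1}}Ty}^2\le\norm{J_{A}x-J_{A}y}^2+\norm{J_{A^{-1}}x-J_{A^{-1}}y}^2.$$
The corollary is then exactly the instance obtained by choosing $y$ to be a fixed point of $T$ that parametrizes the prescribed pair $(z,k)$.

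First I would invoke \cref{fact:para:cc}\ref{fact:para:cc:ii}, which identifies $\SE$ with $(J_A\times J_{A^{-1}})(\fix T)$ through the Minty parametrization of \cref{f:Mintypar}. Since $(z,k)\in\SE$ by hypothesis, this furnishes a point $y\in\fix T$ with $z=J_A y$ and $k=J_{A^{-1}}y$. Because $y$ is a fixed point, $Ty=y$, and hence $J_A Ty=J_A y=z$ and $J_{A^{-1}}Ty=J_{A^{-1}}y=k$.

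Next I would substitute this particular $y$ into \cref{thm:simp:pr}\ref{thm:simp:pr:vv}. The right-hand side collapses to $\norm{J_A x-z}^2+\norm{J_{A^{-1}}x-k}^2$ and the left-hand side to $\norm{J_A Tx-z}^2+\norm{J_{A^{-1}}Tx-k}^2$, which is precisely the central inequality of the statement. The two flanking equalities are merely the norm on the product Hilbert space $X\times X$, namely $\norm{(u,w)}^2=\norm{u}^2+\norm{w}^2$, applied at $(J_A Tx,J_{A^{-1}}Tx)-(z,k)$ and at $(J_A x,J_{A^{-1}}x)-(z,k)$.

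I do not expect a genuine obstacle here; the work has all been done upstream. The one point that deserves care is the bookkeeping step of recognizing that \emph{every} element of the extended solution set is of the form $(J_A y,J_{A^{-1}}y)$ for some $y\in\fix T$. This is what allows the otherwise free second argument in \cref{thm:simp:pr}\ref{thm:simp:pr:vv} to be specialized to a fixed point, so that $J_A Ty$ and $J_{A^{-1}}Ty$ simplify to $z$ and $k$ and the generic two-point estimate becomes the desired one-point Fej\'er-type inequality relative to $\SE$.
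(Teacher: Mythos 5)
Your proposal is correct and follows essentially the same route as the paper: the paper also produces the fixed point (written explicitly as $y=z+k$, citing the same source underlying \cref{fact:para:cc}) with $J_A(z+k)=z$ and $J_{A^{-1}}(z+k)=k$, and then specializes \cref{thm:simp:pr}\ref{thm:simp:pr:vv} to that point. Your use of \cref{fact:para:cc}\ref{fact:para:cc:ii} to parametrize $(z,k)$ by an abstract $y\in\fix T$ is the same argument in equivalent form.
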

 \begin{proof}
 It follows from \cite[Theorem~4.5]{JAT2012}
 that $z+k\in \fix T$, $J_A(z+k)=z$ and 
 $J_{A^{-1}}(z+k)=k$.
Now combine with \cref{thm:simp:pr}\ref{thm:simp:pr:vv} with 
 $y$ replaced by $z+k$.
 \end{proof}

We recall, as consequence of 
\cite[Corollary~22.19]{BC2011}
and \cref{ex:para:goodsub},
 that when $X=\RR$,
the operators $A$ and $B$ are paramonotone.
In view of \cref{fact:para:cc}\ref{fact:para:ii:b}, 
we then have $\SE=Z\times K$. 

\begin{lem}
\label{lem:real:RR:static}
Suppose that $X=\RR$.
Let $x\in X$ and let $ (z,k)\in Z\times K$.
Then the following hold:
\begin{enumerate}
\item
\label{lem:real:1}
$\norm{J_{A} Tx-z}^2\le \norm{J_{A}x-z}^2$.
\item
\label{lem:real:2}
$\norm{J_{A^{-1}} Tx-k}^2\le\norm{J_{A^{-1}} x-k}^2$.
\end{enumerate}
\end{lem}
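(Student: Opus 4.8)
The plan is to exploit the fact that on the real line every firmly nonexpansive operator is nondecreasing, which upgrades the aggregate estimate of \cref{thm:simp:pr:d:iii} to the two separate coordinate estimates. First I would record the monotonicity facts: since $X=\RR$, any firmly nonexpansive $F\colon\RR\to\RR$ satisfies $(Fx-Fy)(x-y)\ge(Fx-Fy)^2\ge0$, so $F$ is nondecreasing. Applying this to $T$ (\cref{f:sd:ZK}\ref{T:fne}) and to the resolvents $J_A$ and $J_{A^{-1}}$ (\cref{f:JA:RA}, used for $A$ and for the maximally monotone $A^{-1}$; note also $J_{A^{-1}}=\Id-J_A$ by \cref{e:iri}) shows that all three maps are nondecreasing. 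I would also record, exactly as in the proof of \cref{thm:simp:pr:d:iii}, that the point $f:=z+k$ lies in $\fix T$ and satisfies $J_Af=z$ and $J_{A^{-1}}f=k$; this is legitimate because in $\RR$ we have $(z,k)\in Z\times K=\SE$.

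Next I would show that $Tx$ lies between $x$ and the fixed point $f$. Testing the firm nonexpansiveness of $T$ at the pair $(x,f)$ gives $(Tx-f)(x-f)=\innp{Tx-Tf,x-f}\ge\normsq{Tx-Tf}=(Tx-f)^2\ge0$. Hence $Tx-f$ and $x-f$ share the same sign, and a short case analysis (dividing by $Tx-f$ when it is nonzero) yields $\min\{x,f\}\le Tx\le\max\{x,f\}$.

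Finally I would push this betweenness through the monotone resolvents. Since $J_A$ is nondecreasing and $J_Af=z$, the inequality $\min\{x,f\}\le Tx\le\max\{x,f\}$ forces $J_ATx$ to lie between $J_Ax$ and $z$; consequently $\norm{J_ATx-z}\le\norm{J_Ax-z}$, which is \ref{lem:real:1}. The identical argument with $J_{A^{-1}}$ in place of $J_A$ and $k=J_{A^{-1}}f$ in place of $z$ yields \ref{lem:real:2}. I expect the only delicate point to be the second step: the genuinely one-dimensional observation that firm nonexpansiveness traps $Tx$ between $x$ and $f$, since the per-coordinate conclusion does not follow from the aggregate estimate of \cref{thm:simp:pr:d:iii} alone.
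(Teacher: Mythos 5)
Your proof is correct, and it reaches the conclusion by a mechanism that genuinely differs from the paper's. The paper works with the factorization $\norm{J_ATx-z}^2-\norm{J_Ax-z}^2=(J_ATx-J_Ax)\bigl((J_ATx-z)+(J_Ax-z)\bigr)$ and argues by cases on the sign of $x-Tx$: monotonicity of $\Id-T$ places $x$ strictly on one side of every fixed point, a fixed point $f$ with $J_Af=z$ is produced from \cref{f:sd:ZK}\ref{fix:inc:Z}, and the monotonicity of $J_A$ and of the composition $J_A\circ T$ then pins down the sign of each factor so that the product is $\le 0$; item \ref{lem:real:2} is then obtained by re-running item \ref{lem:real:1} with $A^{-1}$ in place of $A$ (implicitly using self-duality, \cref{f:sd:ZK}\ref{T:selfdual}, together with \cref{e:iri}). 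You instead test firm nonexpansiveness of $T$ at the single distinguished fixed point $f=z+k$ --- legitimate here because on $\RR$ the operators are paramonotone, so $\SE=Z\times K$, and \cref{fact:para:cc}\ref{fact:para:cc:ii}\&\ref{fact:para:cc:iii} give $f\in\fix T$ with $J_Af=z$ and $J_{A^{-1}}f=k$ --- to obtain the betweenness relation $\min\{x,f\}\le Tx\le\max\{x,f\}$, and then push that interval through the nondecreasing maps $J_A$ and $J_{A^{-1}}$. Every step checks out: $(Tx-f)(x-f)\ge(Tx-f)^2\ge 0$ does trap $Tx$ between $x$ and $f$, and a nondecreasing map carries a point between $x$ and $f$ to a point between $J_Ax$ and $J_Af=z$, whence $\abs{J_ATx-z}\le\abs{J_Ax-z}$. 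Comparing what each route buys: yours handles \ref{lem:real:1} and \ref{lem:real:2} symmetrically in one stroke (no case analysis on the direction of $x-Tx$, no reduction to the dual pair, no need for $J_A\circ T$ to be monotone) and isolates the genuinely one-dimensional ingredient, the betweenness of $Tx$; the paper's route needs only the weaker input $z\in J_A(\fix T)$ rather than the identification $\SE=Z\times K$, though on $\RR$ that identification is available for free, so both arguments are equally rigorous here.
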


\begin{proof}
\ref{lem:real:1}:
Set
 \begin{equation}
 \label{eq:R:q}
q(x,z):=\norm{J_{A} Tx-z}^2- \norm{J_{A}x-z}^2.
\end{equation}
If $x\in \fix T$ we get $q(x,z)=0$. Suppose 
that $x\in \RR\smallsetminus \fix T$.
Since $T$ is firmly nonexpansive 
we have that $\Id-T$ is firmly nonexpansive 
(see \cite[Proposition~4.2]{BC2011}), 
hence monotone by 
\cite[Example~20.27]{BC2011}. 
Therefore $(\forall x\in \RR\smallsetminus \fix T)(\forall f\in \fix T)$ 
we have 
\begin{equation}\label{ine:fix}
{(x-Tx)(x-f)}=\bk{(\Id-T)x-(\Id-T)f}\bk{x-f}> 0.
\end{equation}
Notice that \eqref{eq:R:q} can be rewritten as 
 \begin{equation}\label{eq:R:q:i}
q(x,z)=(J_{A}T x-J_{A} x)(\bk{J_{A} Tx-z}+\bk{J_{A}x-z}).
\end{equation}
We argue by cases. 

\emph{Case~1:} $x< Tx$.\\
It follows from \eqref{ine:fix} that 
\begin{equation}\label{e:incr}
(\forall f\in \fix T)~x<f. 
\end{equation}
On the one hand, since $J_A$ is firmly nonexpansive, 
we have $J_A$ is monotone and 
therefore $J_A T x-J_A x\ge 0$.  
On the other hand, it follows from \cref{f:sd:ZK}\ref{fix:inc:Z} that
$(\exists f\in \fix T)$ such that $z=J_A f=J_A T f$.
Using \eqref{e:incr} and the fact that $J_A$ is monotone
we conclude that 
$J_{A} x-z=J_{A} x-J_{A} f\le 0$. 
Moreover, since $J_A$ and $T$ are firmly nonexpansive
operators on $\RR$, we have $J_A\circ T$ is 
firmly nonexpansive 
hence monotone and therefore
\eqref{e:incr} implies that 
$J_{A} Tx-z=J_{A} Tx-J_{A} Tf\le 0$. Combining with 
\eqref{eq:R:q:i}
we conclude that \ref{lem:real:1} holds.

\emph{Case 2:} $x>Tx$.\\
The proof follows similar to \emph{Case~1}.

\ref{lem:real:2}
Apply the results of \ref{lem:real:1} to $A^{-1}$
 and use \cref{e:iri}.
\end{proof}

In view of \cref{def:ex:sol}
one might conjecture that 
\cref{thm:simp:pr:d:iii}
holds when we replace $\SE$
by $Z\times K$.
The following example gives a negative answer to this conjecture.
It also illustrates that when $X\neq \RR$, the conclusion of 
\cref{lem:real:RR:static} could fail.

 \begin{ex}\label{ex:cn:skew}
Suppose that $X=\RR^2$, that $A$ is 
the normal cone operator 
of $\RR^2_+$, and that 
$B:X\to X:(x_1,x_2)\mapsto (-x_2,x_1)$
is the rotator by $\pi/2$. Then $\fix T=\RR_+\cdot(1,-1)$, 
$Z=\RR_+\times\stb{0}$
and $K=\stb{0}\times\RR_{+}$. Moreover, 
 $(\exists x\in \RR^2)$ $(\exists (z,k)\in Z\times K)$ 
such that 
 $\normsq{(J_{A}T x,J_{A^{-1}} Tx)-(z,k)}
 -\normsq{(J_{A} x,J_{A^{-1}} x)-(z,k)}>0$
  and $\normsq{J_ATx-z}-\normsq{J_Ax-z}= \tfrac{5}{4}a^2>0$.
\end{ex}
\begin{proof}
Let $(x_1,x_2)\in \RR^2$.
Using \cite[Proposition~2.10]{Sicon2014}
 we have 
  \begin{equation}
 J_B(x_1,x_2)=(\tfrac{1}{2}(x_1+x_2),\tfrac{1}{2}(-x_1+x_2))\;\;\;
 \text{and}\;\;\;R_B(x_1,x_2)=(x_2,-x_1)=-B(x_1,x_2).
 \end{equation}
 Hence, $R_B^{-1}=(-B)^{-1}=B$.
Using \eqref{def:T} we conclude that
$(x_1,x_2)\in\fix T\siff (x_1,x_2)\in\fix R_BR_A$. Hence
\begin{subequations}
\begin{align}
\fix T&=\menge{(x_1,x_2)\in \RR^2}{(x_1,x_2)=R_B R_A(x_1,x_2)}\\
&=\menge{(x_1,x_2)\in
\RR^2}{R_B^{-1}(x_1,x_2)=2J_A(x_1,x_2)-(x_1,x_2)}\\
&=\menge{(x_1,x_2)\in \RR^2}{B(x_1,x_2)+(x_1,x_2)=
2P_{\RR^2_+}(x_1,x_2)}\\
&=\menge{(x_1,x_2)\in \RR^2}{(x_1-x_2,x_1+x_2)=2P_{\RR^2_+}(x_1,x_2)}.
\end{align}
\end{subequations}

We argue by cases. 

\emph{Case~1:} $x_1\ge 0$ and  $x_2\ge 0$.
Then $(x_1,x_2)\in \fix T$
$\siff(x_1-x_2,x_1+x_2)=2P_{\RR^2_+}(x_1,x_2)=(2x_1,2x_2)$
$\siff x_1=-x_2$ and $x_1=x_2$ $\siff x_1=x_2=0$. 

\emph{Case~2:} $x_1< 0$ and  $x_2< 0$.
Then $(x_1,x_2)\in \fix T$
$\siff (x_1-x_2,x_1+x_2)=2P_{\RR^2_+}(x_1,x_2)=(0,0)$
$\siff x_1=x_2$ and $x_1=-x_2$ $\siff x_1=x_2=0$,
which contradicts that $x_1< 0$ and  $x_2< 0$.

\emph{Case~3:} $x_1\ge 0$ and  $x_2< 0$.
Then $(x_1,x_2)\in \fix T$
$\siff (x_1-x_2,x_1+x_2)=2P_{\RR^2_+}(x_1,x_2)=(2x_1,0)$
 $\siff x_1=-x_2$. 

\emph{Case~4:} $x_1< 0$ and  $x_2\ge0$.
Then $(x_1,x_2)\in \fix T$
$\siff(x_1-x_2,x_1+x_2)=2P_{\RR^2_+}(x_1,x_2)=(0,2x_2)$
 $\siff x_1=x_2$, which never occurs
  since $x_1< 0$ and  $x_2\ge0$.
 Altogether we conclude that
$\fix T=R_+\cdot(1,-1)$, as claimed.

Using \cref{f:sd:ZK}\ref{fix:inc:Z}\&\ref{fix:inc:K} 
we have 
$
Z=J_A(\fix T)=\RR_+\times\stb{0},
$
and 
$
K=J_{A^{-1}}(\fix T)=(\Id-J_A)(\fix T)=\stb{0}\times\RR_-.
$

Now let $a>0$, let $x=(a,0)$, set $z:=(2a,0)\in Z$ 
and set $k:=(0,-a)\in K$.
Notice that 
$Tx=x-P_{\RR^2_+}+\frac{1}{2}(\Id-B)x
=(a,0)-(a,0)+\frac{1}{2}((a,0)-(0,a))=(\frac{1}{2}a,-\frac{1}{2}a)$.
Hence, $J_Ax=P_{\RR^2_+}(a,0)=(a,0)$, $\JAinv x=P_{\RR^2_-}(a,0)=(0,0)$,
$J_ATx=P_{\RR^2_+}(\frac{1}{2}a,-\frac{1}{2}a)=(\frac{1}{2}a,0)$, 
and $\JAinv x=P_{\RR^2_-}(\frac{1}{2}a,-\frac{1}{2}a)=(0,-\frac{1}{2}a)$.
Therefore
\begin{align*}
&\quad\normsq{(J_{A}T x,J_{A^{-1}} Tx)-(z,k)}
 -\normsq{(J_{A} x,J_{A^{-1}} x)-(z,k)}\\
&=\norm{J_{A}T x-z}^2+\norm{J_{A^{-1}} Tx-k}^2
-\norm{J_{A} x-z}^2-\norm{J_{A^{-1}} x-k}^2
\\
&=\normsq{(\tfrac{1}{2}a,0)-(2a,0)}+\normsq{(0,-\tfrac{1}{2}a)-(0,-a)}
-\normsq{(a,0)-(2a,0)}-\normsq{(0,0)-(0,-a)}\\
&=\tfrac{9}{4}a^2+\tfrac{1}{4}a^2-a^2-a^2=\tfrac{1}{2}a^2>0.
\end{align*}
Similarly one can verify that
$\normsq{J_ATx-z}-\normsq{J_Ax-z}=
  \tfrac{5}{4}a^2>0$.
\end{proof}

\section{Linear relations}

In this section, we assume that\footnote{$A\colon X\rras X$ 
is a \emph{linear relation} if 
$\gra A$ is a linear subspace of $X\times X$.}
\begin{empheq}[box=\mybluebox]{equation*}
A:X\rras X \text{~and~} B:X\rras X\text{~ are maximally monotone
linear relations};
\end{empheq} 
equivalently, by \cite[Theorem~2.1(xviii)]{BMW2}, that 
\begin{equation}
J_A ~\text{and }~J_B ~\text{~are linear operators from $X$ to $X$}. 
\end{equation}
This additional assumption leads to stronger conclusions.

\begin{lem}
\label{lem:lin:B}
$\Id-T ={J_A -2J_BJ_A+J_B}$.
\end{lem}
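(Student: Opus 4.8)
The plan is to start from the closed-form expression for $\Id-T$ that is already recorded in \cref{lem:cluster:fen}\ref{lem:cluster:fen:i}, namely $\Id - T = J_A - J_B R_A$; equivalently, one reads this off directly from \cref{def:T}, since $T = \Id - J_A + J_B R_A$. The entire content of the lemma is then to rewrite the composite $J_B R_A$ using the reflected-resolvent formula $R_A = 2J_A - \Id$ together with the standing linearity hypothesis of this section.

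Concretely, I would substitute $R_A = 2J_A - \Id$ to obtain $\Id - T = J_A - J_B(2J_A - \Id)$. Because $A$ and $B$ are assumed to be maximally monotone linear relations, the resolvent $J_B$ is a linear operator from $X$ to $X$, so it distributes over sums and commutes with scalar multiples: as operators on $X$ we have $J_B \circ (2J_A - \Id) = 2\,J_B J_A - J_B$. Substituting this back yields $\Id - T = J_A - (2\,J_B J_A - J_B) = J_A - 2J_B J_A + J_B$, which is precisely the asserted identity.

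The only step that uses anything beyond the definition of $T$ and of $R_A$ is the distribution of $J_B$ across $2J_A - \Id$, and this is exactly where the linear-relation assumption of the section enters, via the equivalent characterization that $J_A$ and $J_B$ are linear operators. For a general maximally monotone $B$ the resolvent $J_B$ is merely firmly nonexpansive and not additive, so $J_B R_A$ could not be split in this way; hence there is no genuine analytic obstacle here beyond correctly invoking linearity, and no monotonicity or norm estimate is required. I would accordingly keep the write-up to these two substitutions.
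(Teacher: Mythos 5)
Your proof is correct and matches the paper's own argument: the paper likewise starts from $x-Tx=J_Ax-J_BR_Ax$ (i.e.\ \cref{def:T}), substitutes $R_Ax=2J_Ax-x$, and uses the linearity of $J_B$ to expand $J_B(2J_Ax-x)=2J_BJ_Ax-J_Bx$. The only cosmetic difference is that you phrase the computation at the operator level while the paper does it pointwise, and your remark pinpointing linearity of $J_B$ as the sole place the section's standing hypothesis enters is exactly right.
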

\begin{proof}
Let $x\in X$.
Then indeed $x-Tx=J_Ax-J_BR_Ax
=J_Ax-J_B(2J_Ax-x)
=J_Ax-2J_BJ_Ax+J_Bx$.
\end{proof}

\begin{lem}
Suppose that $U$ is a linear subspace of $X$
 and that $A=P_U$. 
 Then $A$ is maximally monotone, 
\begin{equation}
\label{eq:JPU}
J_A=J_{P_U}
=\tfrac{1}{2}(\Id+P_{U^\perp}),
\quad\text{and}\quad R_A=P_{U^\perp}=\Id-A.
\end{equation}
\end{lem}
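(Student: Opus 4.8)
The plan is to verify the three assertions in turn; each reduces to an elementary computation once we exploit the defining properties of the orthogonal projection $P_U$ onto the closed subspace $U$, namely that it is linear, idempotent, and self-adjoint, and that $\ran P_U = U$ is orthogonal to $\ran P_{U^\perp} = U^\perp$, where $P_{U^\perp} = \Id - P_U$.

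First I would show that $A = P_U$ is maximally monotone. Monotonicity follows at once from linearity and idempotence: for every $w \in X$, decomposing $w = P_U w + P_{U^\perp} w$ and using the orthogonality of the two summands gives $\langle P_U w, w \rangle = \|P_U w\|^2 \geq 0$; applying this to $w = x - y$ yields $\langle P_U x - P_U y, x - y\rangle \geq 0$. Since $P_U$ is in addition everywhere defined and (firmly) nonexpansive, hence continuous, its maximal monotonicity follows from the standard fact that a monotone operator that is continuous and has full domain is maximally monotone (see, e.g., \cite{BC2011}).

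Next I would compute $J_A = (\Id + P_U)^{-1}$ using the orthogonal decomposition $X = U \oplus U^\perp$. On $U$ the operator $P_U$ restricts to the identity, so $\Id + P_U$ acts there as $2\,\Id$; on $U^\perp$ the operator $P_U$ vanishes, so $\Id + P_U$ acts there as $\Id$. Consequently $\Id + P_U$ is a bijection with inverse $\tfrac{1}{2}P_U + P_{U^\perp}$, and substituting $P_{U^\perp} = \Id - P_U$ rewrites this as $\tfrac{1}{2}(\Id + P_{U^\perp})$, which is the claimed expression for $J_A$. Equivalently, one can solve $(\Id + P_U)p = x$ directly for $p = J_A x$ by taking projections onto $U$ and $U^\perp$.

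The formula for the reflected resolvent is then immediate: by definition $R_A = 2J_A - \Id = (\Id + P_{U^\perp}) - \Id = P_{U^\perp} = \Id - P_U = \Id - A$, which supplies the remaining two identities. I expect no genuine obstacle here, since every step is routine; the only points deserving care are the appeal to the cited characterization of maximal monotonicity (rather than reproving it) and the standing requirement that $U$ be closed, which is exactly what makes $P_U$ well defined in the first place.
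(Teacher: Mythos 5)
Your proof is correct, and it reaches the formulas by a genuinely different (if equally elementary) route than the paper. The paper works directly with the resolvent relation: writing $y=J_Ax\siff x=y+P_Uy$, it applies $P_{U^\perp}$ to get $P_{U^\perp}x=P_{U^\perp}y$ and then runs a short chain of substitutions to land on $y=\tfrac{1}{2}(x+P_{U^\perp}x)$, after which $R_A=2J_A-\Id=P_{U^\perp}$ exactly as in your last step. You instead invert $\Id+P_U$ blockwise along the orthogonal decomposition $X=U\oplus U^\perp$, observing that it acts as $2\Id$ on $U$ and as $\Id$ on $U^\perp$, so that its inverse is $\tfrac{1}{2}P_U+P_{U^\perp}=\Id-\tfrac{1}{2}P_U=\tfrac{1}{2}(\Id+P_{U^\perp})$. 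The block-diagonal viewpoint makes it transparent \emph{why} the answer has this form, whereas the paper's manipulation never needs to exhibit the inverse as an operator identity, only to solve the pointwise equation. Two further points in your favour: you actually prove the maximal monotonicity assertion (monotonicity from $\innp{P_Uw,w}=\normsq{P_Uw}\ge 0$ together with the standard fact that a continuous monotone operator with full domain is maximally monotone; see, e.g., \cite{BC2011}), whereas the paper's proof verifies only the two displayed identities and leaves that claim unaddressed; and your remark that $U$ must be closed for $P_U$ to be defined on all of $X$ is well taken, since the statement of the lemma says only ``linear subspace.''
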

\begin{proof}
Let $(x,y)\in X\times X$. Then 
\begin{equation}
\label{eq:PUX:PUY}
y=J_Ax\siff x=y+P_Uy.
\end{equation}
Now assume $y=J_Ax$.
Since $P_U$ is linear,
 \cref{eq:PUX:PUY} implies that $P_{U^\perp}x=P_{U^\perp}y$. 
Moreover, $y=x-P_Uy=\tfrac{1}{2}(x+x-2P_Uy)
 =\tfrac{1}{2}(x+y+P_Uy-2P_Uy)=\tfrac{1}{2}(x+y-P_Uy)
 =\tfrac{1}{2}(x+P_{U^\perp}y)=\tfrac{1}{2}(x+P_{U^\perp}x)$.
Next, $R_A=2J_A-\Id=(\Id+P_{U^\perp})-\Id=P_{U^\perp}$.
\end{proof}

We say that a linear relation $A$ is \emph{skew}
(see, e.g., \cite{BWY2010}) if $(\forall (a,a^*)\in \gra A)$ 
$\innp{a,a^*}=0$.
\begin{lem}
Suppose that $A:X\to X$
and $B:X\to X$ 
are both skew, and $A^2=B^2=-\Id$.
Then $\Id -T=\tfrac{1}{2}(\Id-BA)$.
\end{lem}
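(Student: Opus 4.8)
The plan is to reduce the entire statement to explicit closed forms for the resolvents and reflected resolvents of the two skew operators, after which the claim becomes a one-line substitution. First I would record that a bounded, everywhere-defined skew operator is monotone (since $\innp{a-a',A(a-a')}=0\ge 0$) and, being continuous and defined on all of $X$, maximally monotone; thus $A$ and $B$ fall under the standing hypotheses of this section and $J_A,J_B$ are genuine single-valued linear operators on $X$.

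The key step is the algebraic identity $(\Id+A)(\Id-A)=\Id-A^2=2\,\Id$, which holds precisely because $A$ is linear (so it commutes with itself) and $A^2=-\Id$. Together with the symmetric factorization $(\Id-A)(\Id+A)=2\,\Id$, this shows that $\Id+A$ is invertible with $(\Id+A)^{-1}=\tfrac{1}{2}(\Id-A)$; that is, $J_A=\tfrac{1}{2}(\Id-A)$, and consequently $R_A=2J_A-\Id=-A$. The identical computation applied to $B$ yields $J_B=\tfrac{1}{2}(\Id-B)$ and $R_B=-B$. (As a sanity check, the rotator $B$ of \cref{ex:cn:skew} satisfies exactly these formulas.)

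Finally I would substitute these into the identity $\Id-T=J_A-J_BR_A$ supplied by \cref{lem:cluster:fen}\ref{lem:cluster:fen:i} (equivalently, read off directly from \cref{def:T}). Computing $J_BR_A=\tfrac{1}{2}(\Id-B)(-A)=\tfrac{1}{2}(BA-A)$ and subtracting it from $J_A=\tfrac{1}{2}(\Id-A)$, the two copies of $\tfrac{1}{2}A$ cancel, leaving $\Id-T=\tfrac{1}{2}(\Id-BA)$, as claimed. I do not expect any genuine obstacle here: the only point requiring care is the justification of the resolvent formula, where one must invoke both the linearity of $A$ and the relation $A^2=-\Id$ to factor $\Id-A^2$ and deduce invertibility of $\Id+A$; once $J_A=\tfrac12(\Id-A)$, $R_A=-A$ and their $B$-analogues are in hand, the remainder is purely algebraic manipulation of the bounded linear operators $A$ and $B$.
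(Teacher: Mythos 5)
Your proof is correct, and it takes the same basic route as the paper's: obtain closed forms for the reflected resolvents of the two skew operators and substitute them into the formula for $T$. The difference is one of self-containment and of sign. The paper's proof simply cites \cite[Proposition~2.10]{Sicon2014} for the reflected resolvents, whereas you derive them from the factorization $(\Id+A)(\Id-A)=\Id-A^2=2\,\Id$, giving $J_A=\tfrac{1}{2}(\Id-A)$ and hence $R_A=-A$ (and likewise $R_B=-B$); this makes the argument independent of the external reference, at the cost of a few lines. Notably, the paper's proof asserts $R_A=A$ and $R_B=B$, while your computation gives $R_A=-A$ and $R_B=-B$; your sign is the correct one, as one can check against the paper's own \cref{ex:cn:skew}, where the rotator satisfies $R_B(x_1,x_2)=(x_2,-x_1)=-B(x_1,x_2)$. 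The slip in the paper is harmless because the two signs cancel in the product, $R_BR_A=(-B)(-A)=BA$, so both versions arrive at $\Id-T=\tfrac{1}{2}(\Id-R_BR_A)=\tfrac{1}{2}(\Id-BA)$. Your alternative substitution into $\Id-T=J_A-J_BR_A$ from \cref{lem:cluster:fen}\ref{lem:cluster:fen:i} is equally valid, and the preliminary remark that a bounded, everywhere-defined skew linear operator is maximally monotone is a worthwhile addition, since it confirms that $A$ and $B$ satisfy the standing hypotheses of the section.
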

\begin{proof}
It follows from \cite[Proposition~2.10]{Sicon2014} 
that $R_A=A$ and $R_B=B$. Therefore
\cref{def:T} implies that $\Id-T=\tfrac{1}{2}(\Id-R_BR_A)=
\tfrac{1}{2}(\Id-BA)$.
\end{proof}

 \begin{example}
 \label{thm:simp:UV}
Suppose that $A$ and $B$ are skew. 
 Let $x\in X$ and let $y\in X$.
 Then the following hold:
 \begin{enumerate}
\item
\label{thm:simp:UV:i}
$
\innp{Tx-Ty,x-y}=\normsq{Tx-Ty}.
$

\item
\label{thm:simp:UV:ii}
$
\begin{aligned}[t]
\innp{(\Id-T)x-(\Id-T)y,x-y}&=\normsq{(\Id-T)x-(\Id-T)y}.
\end{aligned}
$

\item
\label{thm:simp:UV:iii}
$
\normsq{x-y}=\normsq{Tx-Ty}+\normsq{(\Id-T)x-(\Id-T)y}.
$
\item
\label{thm:simp:UV:iv}
$
\begin{aligned}[t]
&\norm{J_{A} x-J_{A}y}^2+\norm{J_{A^{-1}} x-J_{A^{-1}}y}^2
- \norm{J_{A}T x-J_{A}Ty}^2-\norm{J_{A^{-1}} Tx-J_{A^{-1}} Ty}^2\\
&=
\normsq{(\Id-T)x-(\Id-T)y}.
\end{aligned}
$ 
\item
\label{thm:simp:UV:vi}
$\normsq{x}=\normsq{Tx}+\normsq{x-Tx}$.
\item
\label{thm:simp:UV:vii}
$\innp{Tx, x-Tx}=0$.
\end{enumerate}
 \end{example} 
 \begin{proof}
\ref{thm:simp:UV:i}--\ref{thm:simp:UV:iv}: Apply
\cref{thm:simp:pr},
 and use \cref{Min:par} as well as the skewness of $A$ and $B$.
\ref{thm:simp:UV:vi}: Apply \ref{thm:simp:UV:iii} with $y=0$.
\ref{thm:simp:UV:vii}: We have
$2\innp{Tx, x-Tx}=\normsq{x}-\normsq{Tx}-\normsq{x-Tx}.$
Now apply \ref{thm:simp:UV:vi}. 
 \end{proof}

Suppose that $U$ is a closed affine subspace of $X$.
One can easily verify that 
\begin{equation} 
\label{eq:aff:orthog}
(\forall x\in X)(\forall y\in X)\quad 
\innp{P_U x-P_U y, (\Id-P_U)x-(\Id-P_U)y}=0.
\end{equation}

\begin{example}
Suppose that $U$ and $V$ are closed affine
subspaces of $X$ such that
$U\cap V\neq \fady$, that $A=N_U$, and that
$B=N_V$. 
Let $x\in X$, and 
let $(z,k)\in Z\times K)$.
Then 
\begin{subequations}
\begin{align}
&\hspace{-2cm}\normsq{(P_U x,(\Id-P_{U})x)-(z,k)}-
\normsq{(P_U Tx,(\Id-P_{U}) Tx)-(z,k)}\\
&=
\norm{x-(z+k)}^2
-\norm{Tx-(z+k)}^2
\label{eq:lin:p:i}\\ 
&=
\norm{x-Tx}^2
\label{eq:lin:p:i:i}\\
&=\norm{P_Ux-P_Vx}^2.
\label{eq:lin:p:ii}
\end{align}
\end{subequations}
 \end{example}
 \begin{proof}
As subdifferential operators,  $A$ and $B$ are paramonotone 
(by \cref{ex:para:goodsub}). 
It follows from \cref{fact:para:cc}\ref{fact:para:ii:a}
 and \cite[Theorem~4.5]{JAT2012} 
that
\begin{equation}
\label{eq:zk:infix}
z+k\in \fix T, \quad
P_U(z+k)=z\quad\text{and}\quad (\Id-P_{U})(z+k)=k.
\end{equation}
Hence, in view of \cref{eq:aff:orthog} we have
\begin{subequations}
\label{sub:e:i} 
\begin{align}
&\hspace{-2 cm}\normsq{(P_U x,(\Id-P_{U}) x)-(z,k)}\\
&=\norm{P_Ux-z}^2+\norm{(\Id-P_{U})x-k}^2 \\
&=\norm{P_Ux-P_U(z+k)}^2+\norm{(\Id-P_{U})x-(\Id-P_{U})(z+k)}^2\\
&\quad+2\innp{P_Ux-P_U(z+k),(\Id-P_{U}){x}-(\Id-P_{U}){(z+k)}}\\
&=\norm{P_Ux-P_U(z+k)+(\Id-P_{U}){x}
-(\Id-P_{U}){(z+k)}}^2\\
&=\norm{x-(z+k)}^2.
\end{align}
\end{subequations}
Applying \eqref{sub:e:i} with $x$ replaced by $Tx$ yields
\begin{equation}
\label{sub:e:ii}
\normsq{(P_U Tx,(\Id-P_{U})Tx)-(z,k)}=\norm{Tx-(z+k)}^2.
\end{equation}

Combining \eqref{sub:e:i} and 
\eqref{sub:e:ii} yields \cref{eq:lin:p:i}.
It follows from \cref{eq:aff:orthog} and
 \cref{thm:simp:pr}\ref{thm:simp:pr:i}
 applied with $(A,B, y)$ replaced by $(N_U,N_V,z+k)$
that $\normsq{x-(z+k)}-\normsq{Tx-T(z+k)}=\normsq{x-Tx-((z+k)-T(z+k))}$,
which in view of \cref{eq:zk:infix}, proves \cref{eq:lin:p:i:i}.

Now we turn to \cref{eq:lin:p:ii}.
Let $w\in U\cap V$.
Then $U=w+\parl U$ and $V=w+\parl V$.
Suppose momentarily that $w=0$.
In this case, $\parl U=U$ and $\parl V=V$. 
Using \cite[Proposition~3.4(i)]{JAT2014},
we have
\begin{equation}
\label{T:UV:back}
T=T_{(U,V)}=P_VP_U+P_{V^\perp}P_{U^\perp}.
\end{equation}
Therefore 
\begin{subequations}
\begin{align}
x-Tx
&=P_Ux+P_{U^\perp}x-P_VP_Ux-
P_{V^\perp}P_{U^\perp}x
=(\Id -P_V)P_Ux+(\Id-P_{V^\perp})
P_{U^\perp}x\\
&=P_{V^\perp}P_Ux+P_VP_{U^\perp}x.
\label{x:Tx:UV}
\end{align}
\end{subequations}
Using \eqref{x:Tx:UV} we have 
\begin{subequations}
\label{eq:lin:case}
\begin{align}
\norm{x-Tx}^2&=\norm{P_{V^\perp}
P_Ux+P_VP_{U^\perp}x}^2
=\norm{P_Ux-P_{V}P_Ux
+P_Vx-P_VP_{U}x}^2\\
&=\norm{P_Ux-2P_{V}P_Ux
+P_Vx}^2\\
&=\norm{P_Ux}^2+\norm{P_Vx}^2
+4\norm{P_VP_Ux}^2\\
&\qquad +2\innp{P_Ux,P_Vx}
-4\innp{P_Ux,P_VP_Ux}-4\innp{P_Vx,P_VP_Ux}\\
&=\norm{P_Ux}^2+\norm{P_Vx}^2
-2\innp{P_Ux,P_Vx}=\norm{P_Ux-P_Vx}^2.
\end{align}
\end{subequations}
Now, if $w\neq 0$, by \cite[Proposition~5.3]{BLM16}
we have  $Tx=T_{(\parl U,\parl V)}(x-w)+w$.
Therefore, \cref{eq:lin:case} yields
 $\normsq{x-Tx}=\normsq{(x-w)-T_{(\parl U,\parl V)}(x-w)}
 =\normsq{P_{\parl U}(x-w)-P_{\parl V}(x-w)}
 =\normsq{w+P_{\parl U}(x-w)-(w+P_{\parl V}(x-w))}=
 \norm{P_Ux-P_Vx}^2$,
 where the last equality follows from
  \cite[Proposition~3.17]{BC2011}.
 \end{proof}

\section{Main results}

In this section we consider the case when the set $Z$ is
possibly empty.

We recall the following important fact.
\begin{fact}[\bf{Infimal displacement vector}]
\label{F:v:WD}
{\rm (See, e.g., \cite{Ba-Br-Reich78},\cite{Br-Reich77} 
and \cite{Pazy}.)}
Let $T:X\to X$ be nonexpansive.
Then $\overline{\ran}(\Id-T)$ is convex;
consequently, the infimal displacement vector
\begin{empheq}[box=\mybluebox]{equation}
\label{eq:def:v}
\gap:=P_{\overline{\ran}(\Id-T)}.
\end{empheq}
is the unique and well-defined 
element in $\overline{\ran}{(\Id-T)}$
such that
$
\norm{\gap}=\ds\inf_{x\in X}\norm{x-Tx}.
$
\end{fact}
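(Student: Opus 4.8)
The plan is to read $\gap=P_{\overline{\ran}(\Id-T)}(0)$ as the projection of the origin onto the set $\overline{\ran}(\Id-T)$, so that, once this set is known to be nonempty, closed and convex, the entire statement reduces to a routine instance of the Hilbert-space projection theorem applied to the minimal-norm element. Thus the whole proof hinges on the convexity of $\overline{\ran}(\Id-T)$; everything else is bookkeeping.

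First I would establish that $\Id-T$ is monotone. Indeed, for all $x,y\in X$, Cauchy--Schwarz together with the nonexpansiveness of $T$ gives
\[
\innp{x-y,(\Id-T)x-(\Id-T)y}=\normsq{x-y}-\innp{x-y,Tx-Ty}\ge \normsq{x-y}-\norm{x-y}\,\norm{Tx-Ty}\ge 0.
\]
Since $\Id-T$ is in addition everywhere defined and (Lipschitz, with constant at most $2$) continuous, it is maximally monotone. I would then invoke the classical fact that the closure of the range of a maximally monotone operator is convex (see \cite{BC2011}) to conclude that $\overline{\ran}(\Id-T)$ is convex.

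With convexity in hand, $\overline{\ran}(\Id-T)$ is a nonempty closed convex subset of $X$, so the projection theorem provides a unique point $\gap:=P_{\overline{\ran}(\Id-T)}(0)$, which is precisely the unique element of $\overline{\ran}(\Id-T)$ of minimal norm. Finally, because the norm is continuous and $\ran(\Id-T)$ is dense in its closure, one has $\inf_{w\in\overline{\ran}(\Id-T)}\norm{w}=\inf_{w\in\ran(\Id-T)}\norm{w}=\inf_{x\in X}\norm{x-Tx}$, whence $\norm{\gap}=\inf_{x\in X}\norm{x-Tx}$ together with the uniqueness already noted. The single genuinely non-elementary ingredient, and hence the main obstacle, is the convexity of $\overline{\ran}(\Id-T)$; if one wishes to avoid citing the maximal-monotone range theorem, the classical alternative is the direct argument of Pazy \cite{Pazy} and Baillon--Bruck--Reich \cite{Ba-Br-Reich78} based on the nonexpansive averages $(1-\lambda)\Id+\lambda T$, but the monotone-operator route is the most economical.
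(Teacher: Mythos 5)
Your proposal is correct, and every step checks out: the monotonicity computation for $\Id-T$, the passage to maximal monotonicity (monotone, single-valued, everywhere defined and continuous), the invocation of the convexity of the closed range of a maximally monotone operator, and the final reduction to the projection theorem together with the density argument identifying $\inf_{w\in\overline{\ran}(\Id-T)}\norm{w}$ with $\inf_{x\in X}\norm{x-Tx}$. Note, however, that the paper itself supplies no proof: the statement is recorded as a \emph{Fact} imported from the literature (\cite{Pazy}, \cite{Br-Reich77}, \cite{Ba-Br-Reich78}), so the only comparison available is with those classical arguments. Those papers work directly inside nonexpansive-mapping theory (Pazy's asymptotic analysis of $\norm{T^nx}/n$ and the accretive-operator range results of Bruck and Reich, which are valid even in Banach spaces), whereas you take the modern Hilbert-space shortcut: transfer the problem to monotone operator theory and quote the theorem that $\overline{\ran}\,A$ is convex for $A$ maximally monotone (in \cite{BC2011} this follows by applying the domain-convexity result to $A^{-1}$, since $\ran A=\dom A^{-1}$). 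What your route buys is brevity and a clean reduction of the entire Fact to the projection theorem; what it costs is that the genuine difficulty is relocated into the cited range-convexity theorem, which is itself a nontrivial consequence of Minty's theorem, and the argument is tied to the Hilbert-space/monotonicity framework rather than the more general accretivity setting of the original references. Both routes are legitimate; yours is indeed the most economical one available given the toolkit of \cite{BC2011}.
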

Following \cite{Sicon2014}, 
the \emph{normal problem} associated 
with the ordered pair 
$(A,B)$ is to\footnote{Let $w\in X$ be fixed. 
For the operator 
$A$, the \emph{inner and outer 
shifts} associated with $A$ are defined by 
$
\inns[w]{A}\colon X\To X \colon x\mapsto A(x-w)$
and
$
\outs[w]{A}\colon  X\To X \colon x\mapsto Ax-w.$
Note that $\inns[w]{A}$ and 
$\outs[w]{A}$ are maximally monotone.
}
\begin{equation}
\text{find $x\in X$
such that~}
0\in {_{\gap}A} x+B_{\gap}x=Ax-\gap+B(x-\gap).
\end{equation}
We shall use
\begin{equation}
Z_\gap:=Z_{({_{\gap}A} ,B_{\gap})}
\qquad\text{and }\qquad 
K_\gap:=K_{(({_{\gap}A})^{-1} ,(B_{\gap})^{-\ovee})},
\end{equation}
to denote the \emph{primal normal} and 
\emph{dual normal solutions}, respectively.
It follows from \cite[Proposition~3.3]{Sicon2014}
that
\begin{equation}
Z_v\neq \fady\siff v\in \ran(\Id-T).
\end{equation}

\begin{cor}
Let $x\in X$ and let $y\in X$.
Then the following hold:
\begin{subequations}
\label{eqs:series:summ}
\begin{align}
\sum_{n=0}^\infty\normsq{(\Id-T)T^nx-(\Id-T)T^ny}<+\infty,\\
\sum_{n=0}^\infty\underbrace{
\innp{J_AT^nx-J_AT^ny,\JAinv T^nx-\JAinv T^ny}}_{\ge 0}<+\infty,\\
\sum_{n=0}^\infty\underbrace{\innp{J_BR_AT^nx-J_BR_AT^ny,
J_{B^{-1}}R_A T^nx-J_{B^{-1}}R_AT^n y}}_{\ge 0}<+\infty.
\end{align}
\end{subequations}
Consequently, 
\begin{subequations}
\label{eqs:limits:zeros}
\begin{align}
{(\Id-T)T^nx-(\Id-T)T^ny}\to 0,\\
\innp{J_AT^nx-J_AT^ny,\JAinv T^nx-\JAinv T^ny}\to 0,
\label{e:golden:ineq}\\
\innp{J_BR_AT^nx-J_BR_AT^ny,
J_{B^{-1}}R_A T^nx-J_{B^{-1}}R_AT^n y}\to 0.
\end{align}
\end{subequations}
\end{cor}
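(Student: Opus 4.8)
The statement is a Corollary, so the natural strategy is to extract everything from the already-proven telescoping inequality in \cref{thm:simp:pr}. The plan is to sum the identity in \cref{thm:simp:pr}\ref{thm:simp:pr:i} along the orbit, with $x$ and $y$ replaced by $T^nx$ and $T^ny$, and exploit the fact that $T$ is firmly nonexpansive (\cref{f:sd:ZK}\ref{T:fne}) to make the sum telescope.

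First I would fix $x,y\in X$ and apply \cref{thm:simp:pr}\ref{thm:simp:pr:i} with the pair $(T^nx,T^ny)$ in place of $(x,y)$. Writing $d_n:=\normsq{T^nx-T^ny}$, this reads
\begin{equation*}
d_n=d_{n+1}+\normsq{(\Id-T)T^nx-(\Id-T)T^ny}+2\alpha_n+2\beta_n,
\end{equation*}
where $\alpha_n:=\innp{J_AT^nx-J_AT^ny,\JAinv T^nx-\JAinv T^ny}$ and $\beta_n$ is the corresponding $B$-term built from $J_BR_A$ and $J_{B^{-1}}R_A$. The key point, already recorded in the proof of \cref{thm:simp:pr}\ref{thm:simp:pr:vv}, is that $\alpha_n\ge 0$ and $\beta_n\ge 0$: this follows from \cref{Min:par} together with the monotonicity of $A$ and $B$, since $(J_AT^nx,\JAinv T^nx)$ and the analogous points lie in $\gra A$, respectively $\gra B$. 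Thus the sequence $(d_n)_{\nnn}$ is nonincreasing and bounded below by $0$.

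Next I would sum the displayed identity from $n=0$ to $n=N$. The left-hand minus the first right-hand term telescopes to $d_0-d_{N+1}$, so
\begin{equation*}
\sum_{n=0}^{N}\Big(\normsq{(\Id-T)T^nx-(\Id-T)T^ny}+2\alpha_n+2\beta_n\Big)=d_0-d_{N+1}\le d_0.
\end{equation*}
Since all three summands are nonnegative, each of the three partial-sum sequences is nondecreasing and bounded above by $d_0=\normsq{x-y}<+\infty$, giving the three convergent series in \cref{eqs:series:summ}. The consequences in \cref{eqs:limits:zeros} then follow immediately, because the general term of a convergent series of nonnegative reals tends to $0$.

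I do not anticipate a genuine obstacle here; the work is entirely in bookkeeping. The only point that needs care is verifying the nonnegativity of $\alpha_n$ and $\beta_n$ via the Minty parametrization \cref{Min:par} and monotonicity, which is exactly the step used to derive \cref{thm:simp:pr}\ref{thm:simp:pr:vv} and may simply be cited. One should also note that this argument uses only that $T$ is (firmly) nonexpansive so that the orbit terms are well defined, and that the bound $d_0$ is independent of $N$, which is what forces finiteness of the sums rather than mere nonincrease of $(d_n)_{\nnn}$.
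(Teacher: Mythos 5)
Your proposal is correct and follows essentially the same route as the paper: nonnegativity of the inner-product terms via the Minty parametrization \cref{Min:par} and monotonicity of $A$ and $B$, followed by telescoping the identity of \cref{thm:simp:pr}\ref{thm:simp:pr:i} along the orbit $(T^nx,T^ny)$. Your write-up merely makes the telescoping bookkeeping explicit (the paper compresses it into one sentence), and your appeal to firm nonexpansiveness of $T$ is not actually needed, since the identity itself already forces $\normsq{T^{n+1}x-T^{n+1}y}\le\normsq{T^nx-T^ny}$.
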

\begin{proof}
Let $\nnn$. Applying 
\cref{Min:par}, to the points $T^n x$
 and $T^n y$, we learn that 
 $\stb{(J_AT^n x, J_{A^{-1}}T^n x), (J_AT^n y, J_{A^{-1}}T^n y)}
 \subseteq \gra A$, hence, by monotonicity of $A$ we have 
 $\innp{J_AT^nx-J_AT^ny,\JAinv T^nx-\JAinv T^ny}\ge 0$.
 Similarly 
 $\innp{J_BR_AT^nx-J_BR_AT^ny,J_{B^{-1}}R_A T^nx-J_{B^{-1}}R_AT^n y} \ge0$.
Now
\cref{eqs:series:summ} and 
\cref{eqs:limits:zeros}
 follow from \cref{thm:simp:pr}\ref{thm:simp:pr:i} by telescoping.
\end{proof}

The next result on \fejer\ monotone sequences 
is of critical importance in our analysis. 
(When $(u_n)_\nnn=(x_n)_\nnn$ one obtains a well-known result;
see, e.g., \cite[Theorem~5.5]{BC2011}.)

\begin{lemma}[\bf new \fejer\ monotonicity principle]
\label{Lem:sweet:lem}
Suppose that $E$ is a nonempty closed convex
subset of $X$, that $(x_n)_\nnn$ is a sequence in $X$
that is \emph{\fejer monotone with respect to $E$}, i.e.,
\begin{equation}
(\forall e\in E)(\forall\nnn)\quad\|x_{n+1}-e\|\leq\|x_n-e\|, 
\end{equation}
that $(u_n)_\nnn$ is a bounded sequence in $X$ such that
its weak cluster points lie in $E$, and that
\begin{equation}
\label{e:key:property}
(\forall e\in E)\;\;\innp{u_n-e,u_n-x_n}\to 0.
\end{equation}
Then $(u_n)_\nnn$ converges weakly 
to some point in $E$.
\end{lemma}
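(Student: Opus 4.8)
The plan is to adapt the classical Opial-type argument for Fej\'er monotone sequences, but to route the usual distance-comparison through the bridging hypothesis \eqref{e:key:property} so that the weak cluster points of $(u_n)_\nnn$ are controlled by the Fej\'er monotonicity of the companion sequence $(x_n)_\nnn$. The overall strategy is to show that $(u_n)_\nnn$ has a unique weak cluster point.

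First I would record that, since $(x_n)_\nnn$ is Fej\'er monotone with respect to $E$, for every $e\in E$ the sequence $(\norm{x_n-e})_\nnn$ is nonincreasing and bounded below, hence convergent. Since $X$ is a Hilbert space and $(u_n)_\nnn$ is bounded, the set of weak cluster points of $(u_n)_\nnn$ is nonempty, and by hypothesis it is contained in $E$. The goal then reduces to showing this set is a singleton, because a bounded sequence in a Hilbert space with a unique weak cluster point converges weakly to that point.

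So let $p$ and $q$ be two weak cluster points of $(u_n)_\nnn$, with $p,q\in E$. The heart of the argument is to prove that $\innp{u_n,p-q}$ converges. On the one hand, applying \eqref{e:key:property} with $e=p$ and with $e=q$ and subtracting gives $\innp{p-q,u_n-x_n}\to 0$; since $\innp{u_n,p-q}-\innp{x_n,p-q}=\innp{u_n-x_n,p-q}$, the convergence of $\innp{u_n,p-q}$ is equivalent to that of $\innp{x_n,p-q}$. On the other hand, expanding the difference of squared distances yields the identity
\[
\norm{x_n-p}^2-\norm{x_n-q}^2=-2\innp{x_n,p-q}+\norm{p}^2-\norm{q}^2,
\]
and since both $\norm{x_n-p}^2$ and $\norm{x_n-q}^2$ converge by the first step, $\innp{x_n,p-q}$ converges; hence so does $\innp{u_n,p-q}$. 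Evaluating the limit along a subsequence with $u_{k_n}\weakly p$ gives $\innp{p,p-q}$, and along a subsequence with $u_{l_n}\weakly q$ gives $\innp{q,p-q}$; equating the two values forces $\normsq{p-q}=0$, i.e.\ $p=q$. Combined with the reduction of the previous paragraph, this establishes weak convergence to a point of $E$.

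I expect the main subtlety to be conceptual rather than computational: recognizing that the new hypothesis \eqref{e:key:property} is precisely what is needed to replace $u_n$ by $x_n$ inside the inner product, thereby decoupling the sequence whose cluster points we track from the sequence carrying the Fej\'er structure. The remaining ingredients---the monotone convergence of $(\norm{x_n-e})_\nnn$, the difference-of-squares identity, and the principle that a bounded sequence with a unique weak cluster point converges weakly---are standard.
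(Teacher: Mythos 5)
Your proof is correct, and it takes a somewhat different --- and more self-contained --- route than the paper's. Both proofs start identically, subtracting two instances of \eqref{e:key:property} to obtain $\innp{p-q,u_n-x_n}\to 0$, and both reduce the problem to uniqueness of weak cluster points of $(u_n)_\nnn$. They diverge in how the Fej\'er monotonicity of $(x_n)_\nnn$ is exploited: the paper extracts subsequences along which $x_{k_n}\weak x_1$, $x_{l_n}\weak x_2$, $u_{k_n}\weak e_1$, $u_{l_n}\weak e_2$ simultaneously, derives the identity $\normsq{e_2-e_1}=\innp{e_2-e_1,x_2-x_1}$, and then annihilates the right-hand side by citing a known property of Fej\'er monotone sequences (differences of weak cluster points of $(x_n)_\nnn$ are orthogonal to differences of points of $E$; see \cite[Theorem~6.2.2(ii)]{Bau96} or \cite[Lemma~2.2]{BDM15:LNA}). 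You never deal with weak cluster points of $(x_n)_\nnn$ at all: from the convergence of $(\norm{x_n-e})_\nnn$ and the difference-of-squares identity you obtain that the \emph{scalar} sequence $\innp{x_n,p-q}$ converges, hence so does $\innp{u_n,p-q}$, and computing its limit along the two subsequences of $(u_n)_\nnn$ forces $\normsq{p-q}=0$. In effect you have inlined the proof of the lemma the paper cites; your argument is elementary, avoids the external reference, and dispenses with the bookkeeping of extracting jointly weakly convergent subsequences of two sequences (which in the paper silently relies on the boundedness of the Fej\'er monotone sequence $(x_n)_\nnn$). The paper's version is shorter given the known lemma and makes the link to the standard Fej\'er-monotonicity toolkit explicit; yours is arguably cleaner for a self-contained exposition.
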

\begin{proof}
It follows from \cref{e:key:property}
that
\begin{equation}
\label{e:key:zerolim}
(\forall (e_1,e_2)\in E\times E)\quad
\innp{e_2-e_1,u_n-x_n}=\innp{u_n-e_1,u_n-x_n}
-\innp{u_n-e_2,u_n-x_n}\to 0.
\end{equation}
Now obtain four subsequences $(x_{k_n})_\nnn$,
$(x_{l_n})_\nnn$, $(u_{k_n})_\nnn$ and
$(u_{l_n})_\nnn$ such that
$x_{k_n}\weak x_1$, $x_{l_n}\weak x_2$,
$u_{k_n}\weak e_1$ and $u_{l_n}\weak e_2$.
Taking the limit in \cref{e:key:zerolim} along these 
subsequences we have 
$\innp{e_2-e_1,e_1-x_1}=0=\innp{e_2-e_1,e_2-x_2}$,
hence
\begin{equation}
\label{e:final:conc}
\normsq{e_2-e_1}=\innp{e_2-e_1,x_2-x_1}.
\end{equation}
Since $\stb{e_1,e_2}\subseteq E$, 
we conclude, 
in view of \cite[Theorem~6.2.2(ii)]{Bau96} 
or \cite[Lemma~2.2]{BDM15:LNA}, that
 $\innp{e_2-e_1,x_2-x_1}=0$.
By \cref{e:key:zerolim}, $e_1=e_2$.
\end{proof}

We are now ready for our main result.

\begin{thm}[\bf shadow convergence]
\label{thm:abs:gen:shad}
Suppose that $ x\in X$,
that the sequence $(J_A T^n x)_\nnn$
is bounded and its weak cluster points lie in $Z_v$,
that $Z_v\subseteq \fix(v+T)$
 and that $(\forall n\in \NN)$ $(\forall y\in \fix(v+T))$
 $J_A T^n y=y$.
 Then the ``shadow'' sequence $(J_A T^n x)_\nnn$
 converges weakly to some point in $Z_v$.
\end{thm}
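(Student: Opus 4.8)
The plan is to obtain the conclusion from the new \fejer monotonicity principle (\cref{Lem:sweet:lem}), applied with $E:=Z_v$, with the shadow sequence $u_n:=J_AT^nx$ in the role of the sequence $(u_n)_\nnn$, and with the \emph{de-drifted governing sequence} $x_n:=T^nx+nv$ in the role of the \fejer monotone sequence. The set $Z_v$ is nonempty, since the bounded sequence $(u_n)_\nnn$ has weak cluster points and these lie in $Z_v$ by assumption; and it is closed and convex, being the solution set of the (maximally monotone) normal problem. By hypothesis $(u_n)_\nnn$ is bounded with weak cluster points in $Z_v$, so the two conditions left to verify are the \fejer monotonicity of $(x_n)_\nnn$ relative to $Z_v$ and the asymptotic orthogonality $\innp{u_n-e,u_n-x_n}\to 0$ for each $e\in Z_v$.

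The technical core is the drift identity
\[
(\forall e\in \fix(v+T))(\forall \nnn)\quad T^ne=e-nv .
\]
To prove it, note that $e\in\fix(v+T)$ means $(\Id-T)e=v$. Since $v$ is the minimal-norm element of $\overline{\ran}(\Id-T)$ (\cref{F:v:WD}), any $w$ with $(\Id-T)w=v$, i.e.\ $Tw=w-v$, satisfies $\norm{v}\le\norm{(\Id-T)(w-v)}=\norm{Tw-T(w-v)}\le\norm{w-(w-v)}=\norm{v}$, where the middle equality uses $w-v=Tw$ and the last inequality uses the nonexpansiveness of $T$. Equality throughout, together with the firm nonexpansiveness of $T$ (\cref{f:sd:ZK}\ref{T:fne}), forces $Tw-T(w-v)=w-(w-v)=v$, hence $T(w-v)=(w-v)-v$ and $w-v\in\fix(v+T)$. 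Induction now gives $e-nv\in\fix(v+T)$ and $T^ne=e-nv$ for all $n$.

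Granting the drift identity, \fejer monotonicity of $(x_n)_\nnn$ is immediate: for $e\in Z_v\subseteq\fix(v+T)$ (this is where the hypothesis $Z_v\subseteq\fix(v+T)$ enters) we have $nv=e-T^ne$, so $x_n-e=T^nx+nv-e=T^nx-T^ne$, whence $\norm{x_{n+1}-e}=\norm{T^{n+1}x-T^{n+1}e}\le\norm{T^nx-T^ne}=\norm{x_n-e}$ by nonexpansiveness of $T$. For the orthogonality condition, fix $e\in Z_v$ and invoke the unconditional ``golden'' limit \eqref{e:golden:ineq} with $y=e$. Since $e\in\fix(v+T)$, the remaining hypothesis gives $J_AT^ne=e$, and then the inverse resolvent identity \eqref{e:iri} yields $J_{A^{-1}}T^ne=T^ne-e=-nv$ and $J_{A^{-1}}T^nx=T^nx-u_n$. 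Substituting into \eqref{e:golden:ineq} turns it into $\innp{u_n-e,(T^nx-u_n)+nv}=\innp{u_n-e,x_n-u_n}\to 0$, i.e.\ $\innp{u_n-e,u_n-x_n}\to 0$. \cref{Lem:sweet:lem} then delivers the weak convergence of $(J_AT^nx)_\nnn$ to a point of $Z_v$.

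The main obstacle is the choice of the companion \fejer monotone sequence. The shadow itself need not be \fejer monotone (cf.\ \cref{ex:cn:skew}), while the raw governing sequence $(T^nx)_\nnn$ escapes to infinity along $-v$. The resolution is to recognize that the de-drifted sequence $T^nx+nv$ is \fejer monotone with respect to $\fix(v+T)$ and that its added drift $+nv$ exactly cancels the $-nv$ emerging from $J_{A^{-1}}T^ne$ in the golden limit; the drift identity $T^ne=e-nv$ is precisely what makes this cancellation rigorous.
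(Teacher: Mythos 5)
Your proposal is correct and takes essentially the same route as the paper: the paper's proof applies \cref{Lem:sweet:lem} with exactly the same choices $E=Z_v$, $u_n=J_AT^nx$, $x_n=T^nx+nv$, and verifies \cref{e:key:property} by the same computation combining \cref{e:golden:ineq}, the hypothesis $J_AT^ny=y$, and the drift identity $T^ny=y-nv$ for $y\in\fix(v+T)$. The only difference is that where you prove the drift identity and the \fejer monotonicity of $(T^nx+nv)_\nnn$ from scratch (correctly, via the minimality of $v$ from \cref{F:v:WD} and the firm nonexpansiveness of $T$), the paper simply cites \cite[Proposition~2.4(iv)\&(vi)]{BM15}.
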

\begin{proof}
Let $y\in \fix (v+T)$. Using \cref{e:golden:ineq}
and
\cite[Proposition~2.4(iv)]{BM15}
we have 
\begin{subequations}
\begin{align}
\innp{J_AT^n x - y, T^n x+nv-J_AT^n x}
&=\innp{J_AT^n x - y, T^n x-J_AT^n x-(y-nv-y)}\\
&=\innp{J_AT^n x -J_A T^n y, (\Id-J_A)T^n x-(\Id-J_A)T^n y}\\
&\to 0.
\end{align}
\end{subequations}
Note that \cite[Proposition~2.4(vi)]{BM15} implies that 
$(T^n x +nv)_\nnn$ is \fejer monotone 
with respect to $\fix (v+T)$ and
consequently with respect to $Z_v$.
Now apply  \cref{Lem:sweet:lem} with 
$E$ replaced by $Z_v$,
$(u_n)_\nnn$ replaced by
$(J_AT^n x)_\nnn$,
and $(x_n)_\nnn$ replaced by
$(T^n x +nv)_\nnn$.
\end{proof}

As a powerful application of \cref{thm:abs:gen:shad},
we obtain the following striking 
strengthening of a previous result on normal
cone operators.

\begin{thm}
\label{thm:nc:shad}
Suppose that $U$ and $V$ are nonempty closed 
convex subsets of $X$, that $A=N_U$,  
that $B=N_V$, that $\gap=P_{\overline{\ran}(\Id-T)}$ 
and that $U\cap(v+V)\neq \fady$. 
Let $x\in X$.
Then $(P_UT^n x)_\nnn$ converges weakly to 
some point in $Z_v=U\cap(v+V)$.
\end{thm}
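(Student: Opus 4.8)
The plan is to obtain the conclusion from \cref{thm:abs:gen:shad} applied to the pair $(N_U,N_V)$, after checking its hypotheses. Since $A=N_U$, the resolvent is $J_A=(\Id+N_U)^{-1}=P_U$, so the shadow sequence $(J_AT^nx)_\nnn$ coincides with $(P_UT^nx)_\nnn$, which is the sequence in the statement. I must therefore (i) identify $Z_v$, (ii) verify that $(P_UT^nx)_\nnn$ is bounded with all weak cluster points in $Z_v$, (iii) verify $Z_v\subseteq\fix(v+T)$, and (iv) verify $J_AT^ny=y$ for the relevant $y$. One inclusion is immediate: unwinding the normal problem, $y\in Z_v$ forces $N_Uy\neq\fady$ and $N_V(y-v)\neq\fady$, i.e. $y\in U$ and $y-v\in V$, so $Z_v\subseteq U\cap(v+V)$.

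The geometric input is the description of $v$. By \cref{F:v:WD}, $v$ is the minimal-norm element of $\overline{\ran}(\Id-T)$, and for normal cones one has $v=P_{\overline{U-V}}(0)$, the minimal-norm element of $\overline{U-V}$, whence $\norm v=\operatorname{dist}(U,V)$ (see \cite{BCL04,Sicon2014}). Fixing $y\in U\cap(v+V)$, the pair $(y,y-v)$ satisfies $y\in U$, $y-v\in V$ and $\norm{y-(y-v)}=\norm v=\operatorname{dist}(U,V)$, so it is gap-minimizing; hence $y=P_U(y-v)$ and $y-v=P_V(y)$, which is to say
\begin{equation*}
-v\in N_U(y)\qquad\text{and}\qquad v\in N_V(y-v).
\end{equation*}
Because $0\in N_U(y)$, the second relation yields $v\in N_U(y)+N_V(y-v)$, i.e. $0\in N_Uy-v+N_V(y-v)$, so $y\in Z_v$. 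This gives the reverse inclusion and hence $Z_v=U\cap(v+V)$; in particular $Z_v\neq\fady$, so $v\in\ran(\Id-T)$ by \cite[Proposition~3.3]{Sicon2014}, which is what keeps the \cite{BM15} Fej\'er-monotonicity apparatus available.

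The same certificates settle (iii) and (iv). For $y\in Z_v$ we have $P_Uy=y$ and $R_{N_U}y=2P_Uy-y=y$, so \cref{lem:cluster:fen}\ref{lem:cluster:fen:i} gives $(\Id-T)y=P_Uy-P_VR_{N_U}y=y-P_Vy=y-(y-v)=v$; thus $Ty=y-v$ and $Z_v\subseteq\fix(v+T)$. Invoking the minimal-displacement theory \cite[Proposition~2.4]{BM15} one gets $T^ny=y-nv$, and since $N_U(y)$ is a cone, $-nv\in N_U(y)$, so $P_U(y-nv)=y$, i.e. $J_AT^ny=y$ for every $\nnn$. The remaining and, I expect, genuinely hard ingredient is (ii): the boundedness of $(P_UT^nx)_\nnn$ together with the fact that its weak cluster points solve the best approximation problem, i.e. lie in $Z_v$. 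This is exactly the result of \cite{BCL04}, whose hypothesis $P_{\overline{\ran}(\Id-T)}(0)\in\ran(\Id-T)$ is the relation $v\in\ran(\Id-T)$ established above; everything else is bookkeeping with the gap geometry, while this boundedness and cluster-point statement is the deep analytic input that the present paper promotes from subsequential to full convergence. With (i)--(iv) in hand, \cref{thm:abs:gen:shad} delivers weak convergence of $(P_UT^nx)_\nnn$ to a point of $Z_v=U\cap(v+V)$.
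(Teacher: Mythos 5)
Your proposal is, at the top level, the same proof as the paper's: reduce \cref{thm:nc:shad} to \cref{thm:abs:gen:shad}, with \cite{BCL04} supplying the deep analytic input, namely the boundedness of $(P_UT^nx)_\nnn$ and the localization of its weak cluster points in $U\cap(v+V)$ (this is \cite[Theorem~3.13(iii)(b)]{BCL04}). Where the paper simply cites \cite[Theorem~3.5 and Lemma~3.12]{BCL04} for the remaining hypotheses, you give direct arguments: the identification $Z_v=U\cap(v+V)$ via $v=P_{\overline{U-V}}(0)$ and best approximation pairs, the inclusion $Z_v\subseteq\fix(v+T)$, and the identity $P_UT^ny=P_U(y-nv)=y$ for $y\in Z_v$ using that $N_U(y)$ is a cone containing $-v$. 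All of these computations are correct, and they are a self-contained substitute for the paper's citations.

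There is, however, one genuine mismatch in your final step. \cref{thm:abs:gen:shad} as stated demands $J_AT^ny=y$ for \emph{every} $y\in\fix(v+T)$ and every $n$, while you verify this only for $y\in Z_v$; and this is not a quantifier you could strengthen later, because the stronger property is generally \emph{false} under the hypotheses of \cref{thm:nc:shad}. Take $U=V$ a proper closed subspace: then $T=\Id$, $v=0$, $\fix(v+T)=X$, yet $P_UT^ny=P_Uy\neq y$ whenever $y\notin U$. (More generally, $\fix(v+T)\supseteq Z_v+N_{\overline{U-V}}(v)$ by \cite[Theorem~3.5]{BCL04}, and for $y=e+w$ with $e\in Z_v$ and $0\neq w\in N_{\overline{U-V}}(v)$ one checks $w-nv\in N_U(e)$, so $P_UT^ny=e\neq y$.) Thus, as written, your invocation of \cref{thm:abs:gen:shad} does not literally apply. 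The repair is small but must be stated: in the proof of \cref{thm:abs:gen:shad}, \cref{Lem:sweet:lem} is applied with $E=Z_v$, so condition \cref{e:key:property} --- and hence the hypothesis $J_AT^ny=y$ --- is only ever used for $y\in Z_v$; the theorem therefore holds, with the identical proof, under the weaker hypothesis you actually established. You should also know that the paper's own proof has the same defect in stronger form: it asserts, citing \cite[Lemma~3.12~\&~Proposition~2.4(ii)]{BCL04}, that $P_UT^ny=P_U(y-nv)=y$ for \emph{all} $y\in\fix(v+T)$, which the subspace example above refutes; what is true is only the constancy $P_UT^ny=P_Uy$ with $P_Uy\in Z_v$. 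So your certificates are the right ones --- just make the reduction to the $Z_v$-version of \cref{thm:abs:gen:shad} explicit, either by restating it with the weaker hypothesis or by rerunning its short proof.
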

\begin{proof}
It follows from \cite[Theorem~3.13(iii)(b)]{BCL04}
that $(P_UT^n x)_\nnn$ is bounded and 
its weak cluster points lie in $U\cap(v+V)$.
Moreover \cite[Theorem~3.5]{BCL04} implies that 
$Z_v=U\cap(v+V)\subseteq U\cap(v+V)
+N_{\overline{U-V}}(v)\subseteq \fix (v+T)$.
Finally,
 \cite[Lemma~3.12~\&~Proposition~2.4(ii)]{BCL04}
 imply that
$(\forall y\in \fix (v+T))(\forall \nnn)$ $P_UT^n y=P_U(y-nv)=y$,
 hence all the assumptions of \cref{thm:abs:gen:shad}
 are satisfied and the result follows.
\end{proof}
\begin{rem}
\label{rem:BCL04}
Suppose that  $v\in \ran(\Id -T)$.
More than a decade ago, it was shown in \cite{BCL04}
that when $A=N_U$ and $B=N_V$,
where $U$ and $V$ are nonempty closed convex subsets 
of $X$, that $(P_UT^n x)_\nnn$ is bounded and its weak cluster points 
lie in $U\cap(v+V)$. \cref{thm:nc:shad} 
yields the much stronger result
that $(P_UT^n x)_\nnn$
converges weakly to a point in $U\cap(v+V)$.
\end{rem}

Here is another instance of \cref{thm:abs:gen:shad}. 

\begin{ex}
\label{ex:gen:incon}
Suppose that $U$ is a closed 
linear subspace of
$X$, that $b\in U^\perp\smallsetminus \stb{0}$, that $A=N_U$
and that $B=\Id+N_{(-b+U)}$. 
Then $Z=\fady$, $v=b\in \ran(\Id-T)$,
$Z_v=\stb{0}$ and $K_v=U^\perp$.
Moreover,
$(\forall x\in X)$ $(\forall n\in \NN)$
$P_UT^n x=\tfrac{1}{2^n}P_Ux\to 0$ and
$\norm{P_{U^\perp} T^n x}\to \infty$.
\end{ex}
\begin{proof}
By the Brezis-Haraux theorem 
(see \cite[Theorems~3~\&~4]{Br-H} or \cite[Theorem~24.20]{BC2011})
we have $X=\intr X
\subseteq \intr \ran B 
=
\intr (\ran \Id+\ran N_{(-b+U)})
\subseteq X$,
hence $\ran B=X$.
Using \cite[Corollary~5.3(ii)]{BHM15} we have 
$\overline{\ran}(\Id-T)
=\overline{(\dom A-\dom B)}\cap\overline{(\ran A+\ran B)}
=(U+b-U)\cap(U^\perp+X)=b+U$. 
Consequently, using \cite[Definition~3.6]{Sicon2014} 
 and \cite[Proposition~3.17]{BC2011} we have
\begin{equation}\label{ex:e:loc:gap}
v=P_{\overline{\ran}(\Id-T)}0=P_{b+U}0=b+P_{U}(-b)=b\in U^\perp\smallsetminus \stb{0}.
\end{equation}
Note that $\dom \outs[v]{A}=\dom A=U$
and $\dom \inns[v]{B}=v+\dom B=b-b+U=U$,
hence $\dom ( \outs[v]{A}+\inns[v]{B})=U\cap U=U$.
Let $x\in U$. Using \cref{ex:e:loc:gap}  
we have
\begin{subequations}
\begin{align}
x\in Z_v&\siff 0\in N_{U}x-b+x-b+N_{-b+U}(x-b)
= N_{U}x-b+x-b+N_{U}x\\
&\siff 0\in U^{\perp}-b+x-b+U^\perp=x+U^\perp\siff x\in U^\perp,
\end{align}  
\end{subequations}
  hence $Z_v=\stb{0}$, as claimed.
As subdifferentials, both $A$ and $B$
  are paramonotone, and 
  so are the translated operators
  $\outs[v]{A}$ and $\inns[v]{B}$.  
Since  $ Z_v=\stb{0}$, in view of \cite[Remark~5.4]{JAT2012}  
and \cref{ex:e:loc:gap} we learn that 
\begin{equation}
K_v=(N_U 0-b)\cap(0-b+N_{-b+U}(0-b))=(U^\perp-b)\cap(-b+U^\perp)=U^\perp.
\end{equation}
Next we claim that 
\begin{equation}
\label{eq:PUTX}
(\forall x\in X)\quad P_U Tx=\tfrac{1}{2}P_U x.
\end{equation}
Indeed, note that $J_B=(\Id+B)^{-1}
=(2\Id+N_{-b+U})^{-1}
=(2\Id+2N_{-b+U})^{-1}
=(\Id+N_{-b+U})^{-1}\circ(\tfrac{1}{2}\Id)
=P_{-b+U}\circ(\tfrac{1}{2}\Id)=-b+\tfrac{1}{2}P_{U}$,
where the last identity follows from \cite[Proposition~3.17)]{BC2011}
 and  \cref{ex:e:loc:gap}. 
Now, using that\footnote{It follows from \cite[Corollary~3.20]{BC2011}
that $P_U$ is linear, hence, $P_UR_U=P_U(2P_U-\Id)=2P_U-P_U=P_U$.} 
$P_UR_U=P_U$ and \cref{ex:e:loc:gap} we have 
$P_U T
=P_U(P_{U^\perp} +J_BR_U)
=P_UJ_BR_U
=P_U(-b+\tfrac{1}{2}P_{U})R_U
=P_U(-b+\tfrac{1}{2}P_{U})
=\tfrac{1}{2}P_U$.
To show that 
$(\forall x\in X)$ $(\forall n\in \NN)$
$P_UT^n x=\tfrac{1}{2^n}P_Ux$, we use induction.
Let $x\in X$. Clearly, when $n=0$, the base case holds.
Now suppose that for some $n\in \NN$, we have,
for every $x\in X$,
$P_UT^n x=\tfrac{1}{2^n}P_Ux$.
Now applying the inductive hypothesis with
$x$ replaced by $Tx$,
and using \cref{eq:PUTX},
 we have 
$P_UT^{n+1} x
=P_UT^n (T x)
=\tfrac{1}{2^n}P_U Tx
=\tfrac{1}{2^n}P_U(\tfrac{1}{2}P_Ux)
=\tfrac{1}{2^{n+1}}P_U x$,
 as claimed. Finally, using \cref{ex:e:loc:gap} and 
 \cite[Corollary~6(a)]{Pazy} we have $\norm{T^n x}\to +\infty$,
 hence
\begin{equation}
\normsq{P_{U^\perp}T^n x}=\normsq{T^n x}-\normsq{P_UT^n x}
=\normsq{T^n x}-\tfrac{1}{4^n}\|P_U x\|^2\to+\infty.
\end{equation}
\vspace{-5mm}
\end{proof}

In fact, as we shall now see, the shadow
sequence may be unbounded in the general case, even 
when one of the operators is a normal cone operator.
\begin{rem}{\bf{(shadows 
in the presence of normal solutions)}}
\label{rem:gen:case}
\begin{enumerate}
\item
\label{rem:i}
\cref{ex:gen:incon} illustrates that 
even when normal solutions exists, the shadows need 
not converge.
Indeed, we have $K_v=U^\perp\neq \fady$
but the \emph{dual} shadows satisfy
 $\norm{P_{U^{\perp}} T^n x}\to +\infty$.
 \item 
 Suppose that $A$ and $B$ are as defined in 
 \cref{ex:gen:incon}. Set $\widetilde{A}=A^{-1}$,
 $\widetilde{B}=B^{-\ovee}$ and $\widetilde{Z}=
 Z_{(\widetilde{A},\widetilde{B})}$. 
By \cite[Proposition~2.4(v)]{JAT2012} 
 $\widetilde{Z}\neq \fady\siff K_{(\widetilde{A},\widetilde{B})}=Z_{(A,B)}\neq \fady$,
 hence $\widetilde{Z}= \fady$. Moreover, 
 \cite[Remarks~3.13~\&~3.5]{Sicon2014} 
 imply that
 $v=b\in \ran(\Id-T)$
and  $\widetilde{Z}_v=U^\perp+b=U^\perp\neq \fady$.
However, in the light of \ref{rem:i} the \emph{primal} shadows
satisfy
$\norm{J_{\widetilde{A}}T^n x}=\norm{J_{A^{-1}}T^n x}
=\norm{P_{U^{\perp}} T^n x}\to +\infty$.
\item
Concerning \cref{thm:abs:gen:shad},
it would be interesting to find other conditions sufficient for
weak convergence of the shadow sequence or to even characterize
this behaviour. 
 \end{enumerate}
\end{rem}

\section{A proof of the Lions-Mercier-Svaiter theorem}

In this section, we work under the assumptions that
\begin{equation}
Z\neq \fady 
\quad\text{and} \quad 
\fix T\neq \fady.
\end{equation}

Parts of the following two results are implicit in
\cite{Svaiter}; however, our proofs are different. 

\begin{prop}
\label{cor:cluster:fen}
Let $x\in X$. Then the following hold:
\begin{enumerate}
\item
\label{cor:cluster:fen:i}
$T^n x-T^{n+1} x=J_AT^n x -J_BR_AT^n x
=J_{A^{-1}}T^n x +J_{B^{-1}}R_AT^n x
\to 0$.
\item
\label{cor:cluster:fen:ii}
The sequence $(J_AT^n x,J_BR_AT^n x,J_{A^{-1}}T^n x,
J_{B^{-1}}R_AT^n x)_\nnn$
 is bounded and lies in  $\gra (A\times  B)$.
\end{enumerate}
Suppose that $(a,b,a^*,b^*)$ is
 a weak cluster point of 
 $(J_AT^n x,J_BR_AT^n x,J_{A^{-1}}T^n x,
J_{B^{-1}}R_AT^n x)_\nnn$.
Then:
\begin{enumerate}
\setcounter{enumi}{2}
\item
\label{eq:a=b}
$
a-b=a^*+b^*=0.$ 
\item
\label{eq:a=-b}
$\innp{a,a^*}+\innp{b,b^*}=0.$
\item
\label{eq:in:gra}
$\bk{a,a^*}\in \gra A$
 and 
$\bk{b,b^*}\in \gra B.$
\item
\label{cor:cluster:fen:conc}
For every $x\in X$, the
sequence $(J_AT^n x,J_{A^{-1}}T^n x)_\nnn$
is bounded and its weak cluster points lie in $\SE$.
\end{enumerate}
\end{prop}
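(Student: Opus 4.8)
The plan is to prove the six claims in sequence, since the later ones build on the earlier ones. The setup gives a fixed $x\in X$, and throughout I exploit that $Z\neq\fady$ and $\fix T\neq\fady$, so that in particular the infimal displacement vector is $v=0$ (since a fixed point yields $0\in\ran(\Id-T)$) and $(T^nx)_\nnn$ is a \fejer\ monotone sequence that converges weakly to a fixed point of $T$ by the Krasnosel'ski\u{\i}--Mann / Lions--Mercier theory.

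\textbf{Items \ref{cor:cluster:fen:i} and \ref{cor:cluster:fen:ii}.} First I would record the pointwise identities from \cref{lem:cluster:fen}, applied at $T^nx$ in place of $x$; this immediately gives the two algebraic expressions for $T^nx-T^{n+1}x$. The convergence $T^nx-T^{n+1}x\to 0$ is the asymptotic regularity of the firmly nonexpansive operator $T$ (\cref{f:sd:ZK}\ref{T:fne}), which holds because $\fix T\neq\fady$: the sequence $(T^nx)_\nnn$ is \fejer\ monotone with respect to $\fix T$, so $\normsq{T^nx-f}$ is nonincreasing and hence $\sum_n\normsq{T^nx-T^{n+1}x}<\infty$ by firm nonexpansiveness, forcing $T^nx-T^{n+1}x\to 0$. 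For the boundedness in \ref{cor:cluster:fen:ii}, \fejer\ monotonicity bounds $(T^nx)_\nnn$; then nonexpansiveness of $J_A$, $J_{A^{-1}}=\Id-J_A$, $R_A$, $J_B$, $J_{B^{-1}}$ bounds all four component sequences. The graph membership is exactly \cref{lem:cluster:fen}\ref{lem:cluster:fen:ii} applied at $T^nx$.

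\textbf{Items \ref{eq:a=b}, \ref{eq:a=-b}, \ref{eq:in:gra}.} Fix a weakly convergent subsequence with limit $(a,b,a^*,b^*)$. Since $J_AT^nx-J_BR_AT^nx=T^nx-T^{n+1}x\to 0$ strongly by \ref{cor:cluster:fen:i}, the weak limits satisfy $a-b=0$; similarly the identity $J_{A^{-1}}T^nx+J_{B^{-1}}R_AT^nx=T^nx-T^{n+1}x\to 0$ gives $a^*+b^*=0$, proving \ref{eq:a=b}. For \ref{eq:in:gra}, the four sequences lie in $\gra A$ and $\gra B$ by \ref{cor:cluster:fen:ii}, so I invoke weak-strong sequential closedness of the graph of the maximally monotone operator $A\times B$: here the key point is that along the subsequence one coordinate converges \emph{strongly}. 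This is where the summability relations \cref{eqs:limits:zeros} enter—applying \cref{thm:simp:pr}\ref{thm:simp:pr:i} with $y$ a fixed point $f\in\fix T$ shows $\innp{J_AT^nx-J_Af,J_{A^{-1}}T^nx-J_{A^{-1}}f}\to 0$ and the analogous $B$-term $\to 0$, which combined with the monotone-operator structure upgrades weak convergence of the resolvent values to the strong convergence needed to close the graph, yielding $(a,a^*)\in\gra A$ and $(b,b^*)\in\gra B$. Claim \ref{eq:a=-b} then follows by passing to the limit in $\innp{J_AT^nx,J_{A^{-1}}T^nx}+\innp{J_BR_AT^nx,J_{B^{-1}}R_AT^nx}$ using this strong convergence, together with $a-b=0=a^*+b^*$.

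\textbf{Item \ref{cor:cluster:fen:conc}.} By \ref{eq:a=b} and \ref{eq:in:gra}, a weak cluster point $(a,a^*)$ of $(J_AT^nx,J_{A^{-1}}T^nx)_\nnn$ satisfies $a^*\in Aa$ and $-a^*=b^*\in Bb=Ba$, i.e. $a^*\in Aa$ and $-a^*\in Ba$, which is precisely the condition defining membership in $\SE$ via \cref{def:ex:sol} (with $z=a$, $k=a^*$); boundedness was already established in \ref{cor:cluster:fen:ii}. The main obstacle I anticipate is the upgrade from weak to strong convergence of the resolvent values in \ref{eq:in:gra}: weak-weak limits alone are insufficient for graph-closedness of a monotone operator, so the argument must correctly combine the vanishing of the Minty-type inner products from \cref{eqs:limits:zeros} with the firm nonexpansiveness of $J_A$ to extract strong convergence of at least one coordinate before appealing to maximal monotonicity.
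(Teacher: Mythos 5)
Your items \ref{cor:cluster:fen:i}, \ref{cor:cluster:fen:ii} and \ref{eq:a=b} are correct and coincide with the paper's proof: apply \cref{lem:cluster:fen} at $T^nx$, use asymptotic regularity of the firmly nonexpansive $T$ (valid since $\fix T\neq\fady$), and pass to weak limits in the strongly null identities. The genuine gap is in your item \ref{eq:in:gra}, on which your proof of \ref{eq:a=-b} also leans. You correctly observe that weak$\times$weak sequential closedness of $\gra(A\times B)$ is unavailable, but the repair you propose --- that the vanishing inner products \cref{e:golden:ineq}, ``combined with the monotone-operator structure,'' upgrade weak convergence of the resolvent sequences to the strong convergence of a full coordinate of $\gra(A\times B)$ --- is asserted rather than proved, and no such mechanism exists. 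Concretely: if $A$ and $B$ are skew (as in \cref{thm:simp:UV}), then by \cref{Min:par} every quantity of the form $\innp{J_Ap-J_Aq,J_{A^{-1}}p-J_{A^{-1}}q}$ is \emph{identically} zero, for every $n$ and every reference point; so in that case the hypothesis you invoke carries no information whatsoever, while the conclusion you want is exactly as hard as before. In general, resolvents are not weakly continuous and the shadow sequences are only known (and only true) to converge weakly, so a proof that hinges on extracting strong convergence of $(J_AT^nx)_\nnn$ or $(J_{A^{-1}}T^nx)_\nnn$ along the subsequence cannot be completed; the key step of the proposition therefore remains unestablished in your proposal.

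The paper's proof avoids strong coordinate convergence entirely. Writing $a_n=J_AT^nx$, $b_n=J_BR_AT^nx$, $a_n^*=J_{A^{-1}}T^nx$, $b_n^*=J_{B^{-1}}R_AT^nx$, monotonicity of $A\times B$ gives $\innp{(a_n,b_n)-(x,y),(a_n^*,b_n^*)-(u,v)}\ge 0$ for every $((x,y),(u,v))\in\gra(A\times B)$, and the algebraic identity \cref{lem:abs:8} rewrites this pairing so that the dangerous weak$\times$weak bilinear terms $\innp{a_n,a_n^*}$ and $\innp{b_n,b_n^*}$ cancel: what survives are terms of the form $\innp{a_n-b_n,\,\cdot\,}$ and $\innp{\,\cdot\,,a_n^*+b_n^*}$, i.e.\ the strongly null sequences from item \ref{cor:cluster:fen:i} paired with bounded sequences (hence $\to 0$), together with pairings of the \emph{fixed} vectors $x,y,u,v$ against weakly convergent sequences. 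Passing to the limit along the subsequence and using \ref{eq:a=b} and \ref{eq:a=-b} yields $\innp{(a,b)-(x,y),(a^*,b^*)-(u,v)}\ge 0$, and maximality of $A\times B$ then gives \ref{eq:in:gra}. Note also that \ref{eq:a=-b} needs no limiting argument at all and must be proved \emph{before} \ref{eq:in:gra}, since it feeds into that limit computation: from \ref{eq:a=b} one has $\innp{a,a^*}+\innp{b,b^*}=\innp{a,a^*+b^*}=\innp{a,0}=0$. With \ref{eq:in:gra} repaired along these lines, your item \ref{cor:cluster:fen:conc} goes through as you wrote it.
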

\begin{proof}
\ref{cor:cluster:fen:i}:
Apply
\cref{lem:cluster:fen}\ref{lem:cluster:fen:i} 
with $x$ replaced by $T^n x$. 
The claim of the strong limit follows from 
combining \cref{f:sd:ZK}\ref{T:fne} and \cite[Corollary~2.3]{Ba-Br-Reich78} or
\cite[Theorem~5.14(ii)]{BC2011}.

\ref{cor:cluster:fen:ii}:
The boundedness of the sequence 
follows from the weak convergence of 
$(T^n x)_\nnn$ (see, e.g.,\cite[Theorem~5.14(iii)]{BC2011})
and the nonexpansiveness 
of the resolvents and reflected resolvents 
of monotone operators
(see, e.g., \cite[Corollary~23.10(i) and (ii)]{BC2011}).
Now apply \cref{lem:cluster:fen}\ref{lem:cluster:fen:ii}
with $x$ replaced by $T^n x$.
\ref{eq:a=b}: This follows from taking the weak limit 
along the subsequences 
in \ref{cor:cluster:fen:i}. 
 \ref{eq:a=-b}: In view of \ref{eq:a=b} we have 
 $\innp{a,a^*}+\innp{b,b^*}=\innp{a, a^*+b^*}=\innp{a,0}=0$.
\ref{eq:in:gra}:
Let $((x,y),(u,v))\in \gra (A\times B)$
and set
\begin{equation}
\label{e:def:seq}
a_n:=J_A T^n x, a^*_n:=J_{A^{-1}}T^nx, 
b_n:=J_BR_A T^n x, b^*_n:=J_{B^{-1}}R_AT^nx.
\end{equation}
Applying \cref{lem:abs:8} with
$(a,b,a^*,b^*)$
replaced by 
$(a_n,b_n,a^*_n,b^*_n)$ yields
\begin{align}
\label{eq:apply:lem}
 \innp{(a_n,b_n)-(x,y),(a^*_n,b^*_n)-(u,v)}&
=\innp{\at_n-\bt_n,\ut_n}
+\innp{\xt,u}
-\innp{\xt,\ut_n}
-\innp{\at_n-\bt_n,u}
\nonumber\\
&\quad+\innp{\bt_n,\ut_n+\vt_n}
+\innp{\yt,v}
-\innp{\yt,\vt_n}
-\innp{\bt_n,u+v}.
\end{align}
By \cref{e:prod:resolvent}, 
$A\times B$ is monotone. 
In view of 
\cref{e:def:seq},
\cref{eq:apply:lem}
and \cref{cor:cluster:fen}\ref{cor:cluster:fen:ii},
we deduce that  
\begin{align}
\label{eq:maxm:ve:d}
&\innp{\at_n-\bt_n,\ut_n}
+\innp{\xt,u}
-\innp{\xt,\ut_n}
-\innp{\at_n-\bt_n,u}
\nonumber\\
&+\innp{\bt_n,\ut_n+\vt_n}
+\innp{\yt,v}
-\innp{\yt,\vt_n}
-\innp{\bt_n,u+v}\ge 0.
\end{align}
Taking the limit in \cref {eq:maxm:ve:d} along a subsequence
  and using \cref{e:def:seq}, \cref{cor:cluster:fen}\ref{cor:cluster:fen:i},
  \ref{eq:a=b} and \ref{eq:a=-b} yield
  \begin{align}
  0&\le \innp{x,u}
-\innp{x,a^*}
+\innp{y,v}
-\innp{y,b^*}-\innp{b,u+v}\nonumber\\
&=\innp{x,u}
-\innp{x,a^*}
+\innp{y,v}
-\innp{y,b^*}-\innp{a,u}-\innp{b,v}
+\innp{a,a^*}+\innp{b,b^*}\nonumber\\
&=\innp{a-x,a^*-u}+\innp{b-y,b^*-v}
=\innp{(a,b)-(x,y),(a^*,b^*)-(u,v)}.
  \end{align}
By maximality of $A\times B$ (see \cref{e:prod:resolvent})
we deduce that
$((a,b),(a^*,b^*))\in \gra (A\times B)$. 
Therefore, $(a,a^*)\in \gra A$
 and  $(b,b^*)\in \gra B$.
 \ref{cor:cluster:fen:conc}:
 The boundedness of the sequence follows from
\ref{cor:cluster:fen:ii}.
Now 
let $(a,b,a^*,b^*)$
be a weak cluster point of  
$((J_AT^n x,J_BR_AT^n x,J_{A^{-1}}T^n x,
J_{B^{-1}}R_AT^n x))_\nnn$.
By 
\ref{eq:in:gra} 
we know that $(a,a^*)\in \gra A$
 and $(b,b^*)=(a,b^*)\in \gra B$, which in view of 
\ref{eq:a=-b} implies
 $a^*\in Aa$ and $-a^*=b^*\in Bb=Ba$, hence $(a,a^*)\in \SE$,
 as claimed (see \cref{def:ex:sol}).
\end{proof}

\begin{thm}
\label{thm:simp:pr:d}
Let $x\in X$ and let $(z,k)\in \SE$.
Then  
the following hold:

\begin{enumerate}
\item
\label{thm:simp:pr:d:iii-1}
For every $\nnn$, 
\begin{subequations}
\begin{align}
\normsq{(J_{A}T^{n+1} x,J_{A^{-1}} T^{n+1}x)-(z,k)}
&=
\norm{J_{A}T^{n+1} x-z}^2+\norm{J_{A^{-1}} T^{n+1}x-k}^2\\
&\le 
\norm{J_{A} T^{n}x-z}^2+\norm{J_{A^{-1}} T^{n}x-k}^2\\
&=\normsq{(J_{A}T^{n}x,J_{A^{-1}} T^{n}x)-(z,k)}.
\end{align}
\end{subequations}
\item
\label{thm:simp:pr:d:iii-2}
The sequence $(J_AT^n x, J_{A^{-1}}T^nx)_\nnn$
is Fej\'{e}r monotone with respect to $\SE$.
\item\label{thm:simp:pr:iv} 
The sequence $(J_AT^n x, J_{A^{-1}}T^nx)_\nnn$
converges weakly to some point in $\SE$.
\end{enumerate}
\end{thm}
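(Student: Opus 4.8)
The plan is to dispatch the three parts in sequence, with nearly all of the substantive work already accomplished in \cref{thm:simp:pr:d:iii} and \cref{cor:cluster:fen}. For \ref{thm:simp:pr:d:iii-1}, I would invoke \cref{thm:simp:pr:d:iii} with $x$ replaced by $T^n x$; since $T(T^n x)=T^{n+1}x$, the chain of (in)equalities furnished by that corollary is exactly the asserted relation between the distances of $(J_AT^{n+1}x,J_{A^{-1}}T^{n+1}x)$ and $(J_AT^{n}x,J_{A^{-1}}T^{n}x)$ to an arbitrary $(z,k)\in\SE$. Part \ref{thm:simp:pr:d:iii-2} is then just a reformulation: writing $w_n:=(J_AT^n x,J_{A^{-1}}T^nx)$ and reading \ref{thm:simp:pr:d:iii-1} in the product Hilbert space $X\times X$, the inequality $\norm{w_{n+1}-(z,k)}\le\norm{w_n-(z,k)}$ for all $n\in\NN$ and all $(z,k)\in\SE$ is precisely the definition of Fej\'er monotonicity of $(w_n)_\nnn$ relative to $\SE$.

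For \ref{thm:simp:pr:iv}, the plan is to feed this Fej\'er monotonicity into the standard Fej\'er convergence theorem. By \cref{cor:cluster:fen}\ref{cor:cluster:fen:conc} the sequence $(w_n)_\nnn$ is bounded and all its weak cluster points lie in $\SE$, while by \cref{fact:para:cc}\ref{fact:para:cc:i} the set $\SE$ is a nonempty closed convex subset of $X\times X$. A Fej\'er monotone sequence all of whose weak cluster points belong to the reference set converges weakly to a point of that set; this is \cite[Theorem~5.5]{BC2011}, equivalently the specialization of the new principle \cref{Lem:sweet:lem} to the case $(u_n)_\nnn=(x_n)_\nnn$. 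Applying it in $X\times X$ yields the claimed weak convergence of $(w_n)_\nnn$ to a point of $\SE$.

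I do not anticipate a genuine obstacle at this stage, since the difficult maximal-monotonicity argument that delivers the cluster-point containment was already carried out in \cref{cor:cluster:fen}. The only point demanding care is bookkeeping: one must work consistently in the product space $X\times X$ and apply the Fej\'er theory there, using that $\SE\subseteq X\times X$ is closed and convex and that the weak cluster points referenced in \cref{cor:cluster:fen}\ref{cor:cluster:fen:conc} are taken in that product space.
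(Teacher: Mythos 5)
Your proposal is correct and follows essentially the same route as the paper's own proof: part \ref{thm:simp:pr:d:iii-1} by applying \cref{thm:simp:pr:d:iii} to $T^n x$, part \ref{thm:simp:pr:d:iii-2} as an immediate reformulation, and part \ref{thm:simp:pr:iv} by combining \cref{cor:cluster:fen}\ref{cor:cluster:fen:conc}, \cref{fact:para:cc}\ref{fact:para:cc:i}, and the standard Fej\'er monotone convergence theorem \cite[Theorem~5.5]{BC2011} in the product space $X\times X$. Your added care about working consistently in $X\times X$ is exactly the right bookkeeping, though the paper leaves it implicit.
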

\begin{proof}
  \ref{thm:simp:pr:d:iii-1}:
 Apply \cref{thm:simp:pr:d:iii} with $x$
 replaced by $T^n x$.
 \ref{thm:simp:pr:d:iii-2}:
 This follows directly from 
 \ref{thm:simp:pr:d:iii-1}.
 \ref{thm:simp:pr:iv}:
Combine  \cref{cor:cluster:fen}\ref{cor:cluster:fen:conc}, 
\ref{thm:simp:pr:d:iii-2}, 
\cref{fact:para:cc}\ref{fact:para:cc:i}
 and \cite[Theorem~5.5]{BC2011}.
\end{proof}

\begin{cor}{\bf(Lions--Mercier--Svaiter).}
$(J_AT^n x)_\nnn$
converges weakly to some point in $Z$.
\end{cor}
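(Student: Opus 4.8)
The plan is to read the corollary off the immediately preceding theorem, since essentially all the substantive work has already been done in establishing the weak convergence of the \emph{pair}. By \cref{thm:simp:pr:d}\ref{thm:simp:pr:iv}, the sequence $(J_AT^n x, J_{A^{-1}}T^n x)_\nnn$ converges weakly in $X\times X$ to some point $(z,k)\in\SE$. The entire remaining task is to transfer this to a statement about the first coordinate alone and to locate the limit inside $Z$.

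First I would invoke the elementary fact that weak convergence in the product Hilbert space $X\times X$ forces weak convergence of each component: if $(u_n,w_n)\weak(z,k)$ in $X\times X$, then for every $h\in X$ we have $\innp{(u_n,w_n),(h,0)}=\innp{u_n,h}\to\innp{z,h}$, so that $u_n\weak z$ in $X$. Applying this with $u_n=J_AT^n x$ and $w_n=J_{A^{-1}}T^n x$ yields
\begin{equation*}
J_AT^n x\weak z.
\end{equation*}

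It then remains to check that $z\in Z$. This is immediate from the definition of the extended solution set: by \cref{def:ex:sol} we have $\SE\subseteq Z\times K$, so from $(z,k)\in\SE$ we read off $z\in Z$. Combining with the displayed weak limit completes the argument.

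I do not expect any genuine obstacle here, as the corollary is a direct specialization of \cref{thm:simp:pr:d}\ref{thm:simp:pr:iv}; the only points requiring a word of justification are the passage from the weak convergence of the pair to that of its first coordinate, and the inclusion $\SE\subseteq Z\times K$, both of which are routine. The substance of the Lions--Mercier--Svaiter conclusion is therefore entirely carried by the \fejer-monotonicity of $(J_AT^n x, J_{A^{-1}}T^n x)_\nnn$ with respect to $\SE$ (\cref{thm:simp:pr:d}\ref{thm:simp:pr:d:iii-2}) together with the cluster-point analysis of \cref{cor:cluster:fen}\ref{cor:cluster:fen:conc}, which have already been assembled in the proof of the theorem.
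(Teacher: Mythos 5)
Your proposal is correct and follows exactly the paper's route: the paper's own proof is simply the one-line observation that the corollary follows from \cref{thm:simp:pr:d}\ref{thm:simp:pr:iv}, and you have merely spelled out the two routine steps (componentwise weak convergence in $X\times X$, and the inclusion $\SE\subseteq Z\times K$ built into \cref{def:ex:sol}) that the paper leaves implicit.
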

\begin{proof}
This follows from \cref{thm:simp:pr:d}\ref{thm:simp:pr:iv}; see also
Lions and Mercier's \cite[Theorem~1]{L-M79} and Svaiter's 
\cite[Theorem~1]{Svaiter}.
\end{proof}

\begin{rem}[\bf brief history]
\label{rem:history}
The Douglas--Rachford algorithm has its roots in the 1956 paper 
\cite{DoRa} as a method for solving a system of linear equations.
Lions and Mercier, 
in their brilliant seminal work \cite{L-M79} from 1979, presented a
broad and powerful generalization to its current form. 
(See \cite{BLM16} and \cite{Comb04} for details on this connection.)
They showed that $(T^nx)_\nnn$ converges weakly to a point in $\Fix
T$ and that the bounded shadow sequence $(J_AT^nx)_\nnn$ has all
its weak cluster points in $Z$ provided that $A+B$ was maximally
monotone. 
(Note that resolvents are \emph{not} weakly continuous in general;
see, e.g., 
\cite{Zar71:1} or \cite[Example~4.12]{BC2011}.)
Building on \cite{Bau09} and \cite{EckSvai08}, 
Svaiter provided a beautiful complete answer in 2011 (see \cite{Svaiter})
demonstrating that $A+B$ does not have to be maximally monotone
and that the shadow sequence $(J_AT^nx)_\nnn$ in fact does
converge weakly to a point in $Z$. 
(He used
\cref{thm:simp:pr:d}; however, his proof differs from ours which
is more in the style of the original paper by Lions and Mercier
\cite{L-M79}.)
Nonetheless, when $Z=\varnothing$, 
the complete understanding of $(J_AT^nx)_\nnn$ 
remains open --- to the best of our knowledge,
\cref{thm:abs:gen:shad} is currently the most
powerful result available. 
\end{rem}

In our final result we show that 
when $X=\RR$, the \fejer monotonicity of 
the sequence $(J_AT^n x, J_{A^{-1}}T^n x)_\nnn$
with respect to $S$ can be decoupled to yield 
\fejer monotonicity of $(J_AT^n x)_\nnn$
 and $( J_{A^{-1}}T^n x)_\nnn$ with respect to 
 $Z$ and $K$, respectively.

\begin{lem}
\label{lem:real}
Suppose that $X=\RR$.
Let $x\in X$ and let $ (z,k)\in Z\times K$.
Then the following hold:
\begin{enumerate}
\item
\label{lem:real:1:dyn}
The sequence $(J_A T^n x)_\nnn$
is \fejer monotone with respect to $Z$.
\item
\label{lem:real:2:dyn}
The sequence $(J_{A^{-1}} T^n x)_\nnn$
is \fejer monotone with respect to $K$.
\end{enumerate}
\end{lem}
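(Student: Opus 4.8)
The plan is to obtain the dynamic (iterated) \fejer monotonicity as an immediate iteration of the one-step estimates already recorded in \cref{lem:real:RR:static}. The key observation is that \cref{lem:real:RR:static}\ref{lem:real:1} is precisely the statement that a \emph{single} application of $T$ does not increase the distance from $J_A(\cdot)$ to any $z\in Z$, and likewise \cref{lem:real:RR:static}\ref{lem:real:2} for $J_{A^{-1}}(\cdot)$ and any $k\in K$. Since \fejer monotonicity only asks for the corresponding inequality between consecutive iterates, the entire statement should follow by substituting an iterate $T^n x$ for the free variable $x$.

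For \ref{lem:real:1:dyn}, I would fix $z\in Z$ and $\nnn$, and apply \cref{lem:real:RR:static}\ref{lem:real:1} with $x$ replaced by $T^n x$. Since $T(T^n x)=T^{n+1}x$, this gives
\begin{equation*}
\norm{J_A T^{n+1} x - z}^2 \le \norm{J_A T^n x - z}^2,
\end{equation*}
and taking square roots yields $\norm{J_A T^{n+1} x - z}\le\norm{J_A T^n x - z}$. As $z\in Z$ and $n$ were arbitrary, this is exactly the defining inequality of \fejer monotonicity of $(J_A T^n x)_\nnn$ with respect to $Z$.

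For \ref{lem:real:2:dyn}, the argument is identical after replacing \cref{lem:real:RR:static}\ref{lem:real:1} by \cref{lem:real:RR:static}\ref{lem:real:2}, $J_A$ by $J_{A^{-1}}$, $Z$ by $K$, and $z$ by $k$. There is essentially no obstacle left at this stage: all of the genuine work --- the reduction to monotonicity of $J_A$ and of $J_A\circ T$ on the line, the sign analysis via the case split $x<Tx$ versus $x>Tx$, and the use of \cref{f:sd:ZK}\ref{fix:inc:Z} to realize $z$ as $J_A f = J_A T f$ for some fixed point $f$ --- has already been absorbed into \cref{lem:real:RR:static}. The only point to watch is that the substitution is legitimate for \emph{every} $n$ simultaneously, which is immediate since \cref{lem:real:RR:static} holds for an arbitrary point of $X=\RR$; thus no new case analysis or new limiting argument is required, and the result closes by a one-line invocation of the static estimate at each iterate.
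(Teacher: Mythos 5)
Your proof is correct and is exactly the paper's argument: the paper also proves \cref{lem:real} by applying \cref{lem:real:RR:static} with $x$ replaced by $T^n x$. Your write-up simply spells out the substitution $T(T^n x)=T^{n+1}x$ and the quantifier bookkeeping in more detail.
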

\begin{proof}
Apply \cref{lem:real:RR:static} with $x$
replaced by $T^n x$.
\end{proof}

We point out that the conclusion of \cref{lem:real}
does not hold when $\dim X\ge 2$, 
see \cite[Section~5~\&~Figure~1]{JAT2014}.
\section*{Acknowledgments}
HHB was partially supported by the Natural Sciences and Engineering Research Council of Canada and by the Canada Research Chair Program.

\small

\end{document}